\documentclass[12pt]{amsart}
\usepackage{amsmath, amssymb,graphicx,caption, subcaption, mathtools, enumitem}
\usepackage[pdftex,hyperindex=true]{hyperref}
\usepackage{etoolbox}
\patchcmd{\section}{\scshape}{\bfseries}{}{}
\patchcmd{\subsection}{\bfseries}{\itshape}{}{}

\makeatletter
\def\@seccntformat#1{%
  \protect\textup{\protect\@secnumfont
    \ifnum\pdfstrcmp{section}{#1}=0 \bfseries\fi
    \ifnum\pdfstrcmp{subsection}{#1}=0 \itshape \fi
    \csname the#1\endcsname
    \protect\@secnumpunct
  }%
}  
\makeatother

\usepackage[utf8]{inputenc}
\usepackage[english]{babel}

\theoremstyle{plain}

\newtheorem{theorem}{Theorem}[section]

\newtheorem{corollary}[theorem]{Corollary}
\newtheorem{lemma}[theorem]{Lemma}

\theoremstyle{definition}
\newtheorem{definition}[theorem]{Definition}

\newtheoremstyle{note}
{3pt}
{3pt}
{\itshape}
{}
{\itshape}
{:}
{.5em}
{}

\theoremstyle{note}
\newtheorem{remark}[theorem]{Remark}

\newtheorem{example}[theorem]{Example}

\DeclareMathOperator{\Id}{Id}
\DeclareMathOperator{\Lie}{Lie}
\DeclareMathOperator{\ad}{ad}
\DeclareMathOperator{\Ad}{Ad}
\DeclareMathOperator{\Conv}{Conv}
\DeclareMathOperator{\Ker}{Ker}

\DeclareMathOperator{\diag}{diag}
\DeclareMathOperator{\Circ}{Circ}
\DeclareMathOperator{\rank}{rank}
\DeclareMathOperator{\Comm}{Comm}
\DeclareMathOperator{\Span}{Span}


\numberwithin{equation}{section}

\newcommand{\acknowledge}{\subsection*{Acknowledgements}}

\def\Dbar{\leavevmode\lower.6ex\hbox to 0pt{\hskip-.23ex \accent"16\hss}D}

\begin{document}

\title[Commutators and Cartan subalgebras]{Commutators and Cartan subalgebras in Lie algebras of compact semisimple Lie groups}
\author{J. Malkoun}
\address{Department of Mathematics and Statistics\\
Notre Dame University-Louaize, Zouk Mikael\\
Lebanon}
\email{joseph.malkoun@ndu.edu.lb}

\author{N. Nahlus}
\address{Department of Mathematics\\
Faculty of Arts and Sciences\\
American University of Beirut\\
Beirut, Lebanon\\}
\email{nahlus@aub.edu.lb}

\date{Received: date / Accepted: date}

\maketitle

\begin{abstract} First we give a new proof of Goto's theorem for Lie algebras of compact semisimple Lie groups 
using Coxeter transformations. Namely, every $x$ in $L = \Lie(G)$ can be written as $x =[a, b]$ for some $a$, $b$ 
in $L$. By using the same 
method, we  give a new proof of the following theorem (thus avoiding the classification tables of fundamental 
weights):
\emph{in compact semisimple Lie algebras, orthogonal Cartan subalgebras always exist} (where one of them can be 
chosen arbitrarily). Some of the 
consequences of this theorem are the following. $(i)$ If $L=\Lie(G)$ is such a Lie algebra and $C$ is any Cartan subalgebra of $L$, then the $G$-orbit of 
$C^{\perp}$ is all of $L$. $(ii)$ The consequence in part $(i)$ answers a question by L. Florit and W. Ziller on 
fatness of certain principal bundles. It also shows that in our case, the commutator map $L \times L \to L$ is 
open at $(0, 0)$. $(iii)$ given any regular element $x$ of $L$, there exists a regular element $y$ such that
$L = [x, L] + [y,L]$ and $x$, $y$ are orthogonal. Then we generalize this result about compact semisimple 
Lie algebras to the class of non-Hermitian real semisimple Lie algebras having full rank.

Finally, we survey some recent related results , and construct explicitly orthogonal Cartan subalgebras in 
$\mathfrak{su}(n)$, $\mathfrak{sp}(n)$, $\mathfrak{so}(n)$.
\end{abstract}

\maketitle

\section{Introduction} \label{section1}

The Lie algebra version of Goto's Theorem \cite{Got} for compact semisimple Lie groups is not as
well known as it ought to be. It says that
\begin{theorem} \label{thm-1_1} Let $L=\Lie(G)$ be the Lie algebra of a compact semisimple Lie group $G$.
Then every element $x$ in $L$, can be written as $x =[a, b]$ for some $a$, $b$ in $L$.
\end{theorem}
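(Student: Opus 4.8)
The plan is to combine the conjugation-invariance of the commutator relation with the fact that in a compact semisimple Lie algebra every element lies in a Cartan subalgebra, and then to transport the given element ``across'' a Coxeter element of the Weyl group. First I would note that if $x=[a,b]$ then $\Ad(g)x=[\Ad(g)a,\Ad(g)b]$ for every $g\in G$, so the set of commutators in $L$ is $\Ad(G)$-invariant. For any $x\in L$ the closure of $\{\exp(tx):t\in\mathbb R\}$ is a torus in $G$, lying in some maximal torus $T$, hence $x\in\mathfrak t:=\Lie(T)$; since all maximal tori of the compact connected group $G$ are conjugate, after replacing $x$ by a suitable $\Ad(g)x$ I may assume $x$ lies in one fixed Cartan subalgebra $\mathfrak t$ of $L$.

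Next I would bring in a Coxeter element $c$ of the Weyl group $W=N_G(T)/T$ (any $w\in W$ with $\Id-w$ invertible on $\mathfrak t$ would do; Coxeter elements provide a uniform, type-independent such choice). The key structural input is that $c$ acts on $\mathfrak t$ with no nonzero fixed vector, i.e., $\Id-c$ is invertible on $\mathfrak t$; this is classical and, importantly for the paper's stated goal, needs no case-by-case inspection of root systems. Choosing $n\in N_G(T)$ that maps to $c$, the operator $\Ad(n)$ preserves $\mathfrak t$ and acts there as $c$, so for the given $x\in\mathfrak t$ there is $h'\in\mathfrak t$ with $(\Id-\Ad(n))\,h'=x$. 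As $G$ is compact and connected, $\exp\colon L\to G$ is surjective, so $n=\exp(a)$ for some $a\in L$. Putting $b:=\int_0^1 \Ad(\exp(ta))\,h'\,dt\in L$ and using $\tfrac{d}{dt}\Ad(\exp(ta))\,h'=[a,\Ad(\exp(ta))\,h']$ together with the fundamental theorem of calculus,
\begin{align*}
 {}[a,b] &= \int_0^1 \bigl[a,\Ad(\exp(ta))\,h'\bigr]\,dt = \int_0^1 \frac{d}{dt}\bigl(\Ad(\exp(ta))\,h'\bigr)\,dt \\
 &= \Ad(\exp(a))\,h' - h' = \Ad(n)\,h' - h' = -x ,
\end{align*}
so $x=[b,a]$, which finishes the proof.

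The hard part is the single non-formal ingredient above: the absence of a nonzero fixed vector for a Coxeter element on $\mathfrak t$, equivalently the invertibility of $\Id-\Ad(n)$ there. This is exactly where the Coxeter-transformation machinery of the paper does its work and is what lets one avoid the classification tables; the remaining pieces --- the conjugacy reduction, surjectivity of $\exp$ on a compact group, and the integral identity turning $\Ad(n)h'-h'$ into a single bracket --- are routine. I would finally double-check the (harmless) sign bookkeeping in the last display and record that no regularity hypothesis on $x$ is needed, since the argument applies verbatim to every $x\in\mathfrak t$, hence to all of $L$.
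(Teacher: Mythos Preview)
Your proof is correct and follows essentially the same route as the paper's argument in Section~\ref{section2}: reduce to a fixed Cartan subalgebra via conjugacy, use that a Coxeter element has no nonzero fixed vector on $\mathfrak t$ (Lemmas~\ref{lemma-2_1}--\ref{lemma-2_2}), lift it to $n=\exp(a)$ by surjectivity of $\exp$, and then express $\Ad(n)h'-h'$ as a single bracket with $a$. The only cosmetic difference is that the paper factors $\exp(\ad N)-\Id$ through the power series $\sum_{k\ge 0}\ad(N)^k/(k+1)!$, whereas you write the same operator as the integral $\int_0^1\Ad(\exp(ta))\,dt$; these are literally equal, so the two presentations coincide.
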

The first proof of Theorem \ref{thm-1_1} (i.e. Goto's Theorem, additive version) was proved by 
Karl-Hermann Neeb \cite[p. 653]{H-M2} (that he has communicated to the authors of \cite{H-M2}). His proof 
was based on Kostant's convexity Theorem \cite[Thm. 8.1]{Kos}, or more precisely, the version of Kostant's
convexity Theorem for compact semisimple Lie groups. Such proof will be presented in
Appendix \ref{sectionA} but slightly simplified using \cite[Lemma 2.2]{Akh}.
In section \ref{section2}, we prove Theorem \ref{thm-1_1} directly by using any Coxeter transformation of the
Weyl group $W(L, C)$ where $C$ is a maximal toral subalgebra of $L$. We remark that certain
Coxeter transformations were used to prove Goto's Theorem (on the group level) as in
Bourbaki's book \cite[Corollary, section 4 of chapter 9]{Bor2} or Hofmann and Morris's book \cite[Corollary 6.56]{H-M2}. 
To the best of our knowledge, this direct proof is new.
In section \ref{section3}, we prove the following theorem.
\begin{theorem} \label{thm-1_2} Let $\Lie(G)$ be the Lie algebra of a compact semisimple Lie group $G$.
Then, with respect to the negative of its Killing form, $\Lie(G)$ has orthogonal Cartan subalgebras 
(maximal toral subalgebras) where one of them can be chosen arbitrarily.\end{theorem}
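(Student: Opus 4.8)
The plan is to obtain the second Cartan subalgebra as the fixed‑point subalgebra $Z_L(n_0)$ of a representative $n_0\in N_G(C)$ of a Coxeter transformation $w$ of $W(L,C)$; the only property of $w$ that will be used is the same one that drives Section~\ref{section2}, namely that $1$ is not an eigenvalue of $w$ on $C$. It suffices to treat the case $L$ simple, since a semisimple $L$ is the $B$‑orthogonal direct sum of its simple ideals, a Cartan subalgebra and a Coxeter transformation split accordingly, and orthogonality for $L$ reduces to orthogonality in each ideal. So assume $L$ simple of rank $\ell$ and Coxeter number $h$; fix an arbitrary Cartan subalgebra $C$, put $T=\exp C$ and $W=W(L,C)=N_G(C)/T$, and let $\mathfrak g=L\otimes\mathbb C$, $\mathfrak h=C\otimes\mathbb C$, with roots $\Phi$ and $\mathfrak g=\mathfrak h\oplus\bigoplus_{\alpha\in\Phi}\mathfrak g_\alpha$. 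Let $w$ be a Coxeter transformation of $W$ and $n_0\in N_G(C)$ a representative of it. Two classical facts enter: $(a)$ the eigenvalues of $w$ on $\mathfrak h$ are the numbers $e^{2\pi i m_j/h}$ with $1\le m_j\le h-1$, so $w$ has no eigenvalue $1$; and $(b)$ $w$ permutes the $|\Phi|=\ell h$ roots freely, in exactly $\ell$ cycles of length $h$.

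The key claim is that $C':=Z_L(n_0)=\Ker(\Ad(n_0)-\Id)=\Lie\big(Z_G(n_0)\big)$, which is a subalgebra of $L$ because $\Ad(n_0)$ is an automorphism, is a Cartan subalgebra orthogonal to $C$; granting this, the theorem follows since $C$ was arbitrary. To see that $C'$ is a Cartan subalgebra I count $\dim C'$ two ways. First, $\Ad(n_0)$ preserves $\mathfrak h$, on which it acts as $w$, and preserves $\mathfrak m:=\bigoplus_{\alpha}\mathfrak g_\alpha=(C^\perp)\otimes\mathbb C$, on which it maps each line $\mathfrak g_\alpha$ isomorphically onto $\mathfrak g_{w\alpha}$. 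By $(a)$, $\Ad(n_0)-\Id$ is injective on $\mathfrak h$; by $(b)$, $\mathfrak m$ is an $\Ad(n_0)$‑invariant direct sum of $\ell$ subspaces of dimension $h$, on each of which $\Ad(n_0)$ is a ``weighted $h$‑cycle'' (a cyclic permutation of $h$ lines, with scalars) and so has characteristic polynomial of the form $t^h-c$, in which the eigenvalue $1$ occurs with multiplicity at most one. Hence $\dim C'\le\ell$. On the other hand $n_0$ is $G$‑conjugate to some $t\in T$, so $\Ad(n_0)$ is linearly conjugate to $\Ad(t)$, whose $1$‑eigenspace $Z_L(t)$ contains $C$; since conjugate operators have equal eigenvalue multiplicities, $\dim C'\ge\ell$. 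Therefore $\dim C'=\ell$, and in fact $Z_L(t)=C$, so $C'$ is $G$‑conjugate to $C$ and is a Cartan subalgebra.

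Orthogonality is then automatic: $\Ad(n_0)$ is orthogonal for $B$ (equivalently for $-B$), so distinct eigenspaces $E_\lambda,E_\mu\subseteq\mathfrak g$ of $\Ad(n_0)$ are $B$‑orthogonal whenever $\lambda\mu\ne1$. Now $C'\otimes\mathbb C=E_1$, while $C\otimes\mathbb C=\mathfrak h$ lies in the sum of the $E_\lambda$ with $\lambda\ne1$ by $(a)$; since $\lambda\cdot1\ne1$ for every such $\lambda$, we get $B(C\otimes\mathbb C,\,E_1)=0$, i.e.\ $C\perp C'$, and the theorem follows. The only genuinely non‑formal step is the bound $\dim Z_L(n_0)\le\ell$ — that a representative of a Coxeter transformation is a regular element of $G$ — which is exactly where fact $(b)$ is used, and which plays here the role that invertibility of $w-\Id$ on $C$ plays in the proof of Theorem~\ref{thm-1_1}; both the two‑sided dimension count and the eigenspace‑orthogonality argument are soft.
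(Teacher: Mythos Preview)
Your argument is correct and genuinely different from the paper's. The paper lifts the Coxeter representative to the Lie algebra via the exponential map, writing $n=\exp(N)$ and using the factorisation of $\exp(\ad N)-\Id$ through $\ad N$ to get $C\subseteq[N,L]$; it then trades $N$ for a regular $a$ with $N\in Z(a)$, so that $C\subseteq[a,L]=Z(a)^{\perp}$ and the orthogonal Cartan is $Z(a)$. You instead stay at the group level: the orthogonal Cartan is $C'=Z_L(n_0)$, identified as a Cartan subalgebra by the two-sided dimension count (conjugacy of $n_0$ into $T$ for $\ge$, the weighted-cycle analysis on root spaces for $\le$), and orthogonality drops out of the $B$-orthogonality of distinct eigenspaces of the isometry $\Ad(n_0)$ together with fact~(a). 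Your route is cleaner for the orthogonality statement itself---no exponential series, no passage from $N$ to a regular $a$---but it consumes more structure of the Coxeter element: the paper needs only fact~(a) (Lemma~\ref{lemma-2_1}), whereas your upper bound $\dim C'\le\ell$ genuinely uses fact~(b), the free action of $w$ on $\Phi$ in $\ell$ cycles of length $h$ (equivalently, that $n_0$ is regular in $G$). Conversely, the paper's detour through $[N,L]$ is not wasted: it simultaneously yields Theorem~\ref{thm-1_1}, which your argument does not touch.
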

This interesting result was proved recently by d’Andrea and Maffei in \cite[Lemma 2.2]{A-M}, via
the classification tables of fundamental weights for all types of simple Lie algebras
(except type $A_n$). Specifically, their proof uses the tables in \cite{Bor2} to verify that (in any root system of $(L, C)$ 
where $C$ is a Cartan subalgebra of $L$), the
highest root is either equal or twice some fundamental weight (in all simple Lie algebras except of type $A_n$). 
However our proof is a simple consequence of our methods in section \ref{section2}.
Theorem \ref{thm-1_2} has the following Corollary.
\begin{corollary} \label{cor-1_3} Let $L= \Lie(G)$ be the Lie algebra of a compact semisimple Lie group $G$.
Let $C$ be a Cartan subalgebra of $L$ and let $C^{\perp}$ be the orthogonal complement of $C$
(with respect to negative of the Killing form on $L$). Then the $G$-orbit of $C^{\perp}$ is all of $L$. \end{corollary}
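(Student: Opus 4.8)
The plan is to deduce the Corollary immediately from Theorem \ref{thm-1_2} together with two standard facts about the Lie algebra $L$ of a compact semisimple Lie group: every element of $L$ lies in some Cartan subalgebra, and any two Cartan subalgebras of $L$ are conjugate under the adjoint action of $G$.

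First I would apply Theorem \ref{thm-1_2} with the given Cartan subalgebra $C$ playing the role of the one that ``can be chosen arbitrarily''. This yields a Cartan subalgebra $C'$ of $L$ which is orthogonal to $C$ with respect to the negative of the Killing form; in particular $C' \subseteq C^{\perp}$.

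Next, take an arbitrary $x \in L$. Since $G$ is compact, the closure of the one-parameter subgroup $\{\exp(tx): t \in \mathbb{R}\}$ is a torus, hence lies in some maximal torus $T_x$ of $G$, and therefore $x \in \Lie(T_x)$, which is a Cartan subalgebra of $L$. By conjugacy of maximal tori, $\Lie(T_x) = \Ad(g)\,C'$ for some $g \in G$. Since $\Ad(g)$ is linear and $C' \subseteq C^{\perp}$, we get
\[
  x \;\in\; \Ad(g)\,C' \;\subseteq\; \Ad(g)\,C^{\perp},
\]
so $x$ belongs to the $G$-orbit of $C^{\perp}$. As $x$ was arbitrary, the $G$-orbit of $C^{\perp}$ is all of $L$.

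I do not expect a genuine obstacle: granting Theorem \ref{thm-1_2}, the Corollary is a formal consequence of the conjugacy of Cartan subalgebras. The only points deserving a word of care are that the orthogonality hypothesis of Theorem \ref{thm-1_2} gives precisely the inclusion $C' \subseteq C^{\perp}$ (immediate, since $\dim C' = \dim C = \rank L$ and $C'$ is orthogonal to $C$), and that this inclusion is transported correctly by the adjoint action (immediate, as $\Ad(g)$ is a linear isomorphism of $L$). One may also record the consistency check $\dim C^{\perp} = \dim L - \rank L \ge \rank L$, which must hold for $C^{\perp}$ to be able to contain a full Cartan subalgebra.
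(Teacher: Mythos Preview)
Your proof is correct and is essentially the same as the paper's own argument (Theorem~\ref{thm-3_2}): apply Theorem~\ref{thm-1_2} to obtain a Cartan subalgebra $C' \subseteq C^{\perp}$, then use that every element of $L$ is $G$-conjugate into $C'$. The only cosmetic difference is that the paper moves $x$ into $C'$ (finding $g$ with $g.x \in C' \subseteq C^{\perp}$) while you move $C'$ to $x$ (finding $g$ with $x \in \Ad(g)C' \subseteq \Ad(g)C^{\perp}$), which is the same statement up to replacing $g$ by $g^{-1}$.
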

We note that Corollary \ref{cor-1_3} can be easily obtained directly from Kostant's convexity
Theorem (see Appendix \ref{sectionA}). In fact, Corollary \ref{cor-1_3} was essentially the key step in 
Karl-Hermann Neeb's proof of Goto's Theorem, additive version.
In section \ref{section4}, we present two “applications” of Corollary \ref{cor-1_3}. First, Corollary \ref{cor-1_3} answers a
question by L. Florit and W. Ziller on fatness of certain principal bundles as follows
\begin{corollary} \label{cor-1_4} If $G$ is a compact semisimple Lie group, endowed with minus its Killing form,
and $G \hookrightarrow P \to B$ is a principal $G$-bundle on $B$ with total space $P$, endowed with a
connection $1$-form $\theta : TP \to \mathfrak{g}$, then $\theta$ is fat if and only if $C^{\perp}$ is fat, where
$C$ can be chosen to be any Cartan subalgebra of $\mathfrak{g}$. \end{corollary}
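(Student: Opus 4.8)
The plan is to deduce the equivalence directly from Corollary~\ref{cor-1_3}, using only the elementary observation that Weinstein's fatness condition depends on the test direction solely through its adjoint orbit. Recall the set-up: writing $\Omega=d\theta+\tfrac12[\theta,\theta]$ for the curvature of $\theta$ (a horizontal $\mathfrak g$-valued $2$-form on $P$) and identifying $\mathfrak g$ with $\mathfrak g^{*}$ by means of minus the Killing form, one says that $\theta$ is \emph{fat in the direction $v\in\mathfrak g$} if the scalar $2$-form $\langle v,\Omega_p\rangle$ is nondegenerate on the horizontal subspace $\mathcal H_p\subseteq T_pP$ for every $p\in P$; $\theta$ is \emph{fat} if it is fat in every direction $v\ne 0$, and a subset $S\subseteq\mathfrak g$ is \emph{fat} (for $\theta$) if $\theta$ is fat in every direction $v\in S\setminus\{0\}$. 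Write $\mathcal F\subseteq\mathfrak g$ for the set of directions in which $\theta$ is fat, so that ``$\theta$ is fat'' means $\mathcal F=\mathfrak g\setminus\{0\}$ and ``$C^{\perp}$ is fat'' means $C^{\perp}\setminus\{0\}\subseteq\mathcal F$.

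The first step is to record that $\mathcal F$ is $\Ad(G)$-invariant. From the transformation law $R_g^{*}\Omega=\Ad(g^{-1})\circ\Omega$ and the $\Ad$-invariance of the Killing form, a short computation shows that, under the linear isomorphism $dR_g\colon\mathcal H_p\to\mathcal H_{pg}$ induced by the right translation $R_g$ (which preserves the horizontal distribution), the $2$-form $\langle v,\Omega_{pg}\rangle$ pulls back to $\langle\Ad(g)v,\Omega_p\rangle$. Since nondegeneracy of an alternating bilinear form is preserved under linear isomorphisms, $\langle v,\Omega_q\rangle$ is nondegenerate for all $q\in P$ if and only if $\langle\Ad(g)v,\Omega_p\rangle$ is nondegenerate for all $p\in P$; that is, $v\in\mathcal F$ if and only if $\Ad(g)v\in\mathcal F$, for every $g\in G$.

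With this in hand the two implications are immediate. If $\theta$ is fat then $\mathcal F=\mathfrak g\setminus\{0\}$, which contains $C^{\perp}\setminus\{0\}$, so $C^{\perp}$ is fat. Conversely, suppose $C^{\perp}$ is fat and let $v\in\mathfrak g$ be nonzero. By Corollary~\ref{cor-1_3} the $G$-orbit of $C^{\perp}$ is all of $\mathfrak g$, so $v=\Ad(g)w$ for some $g\in G$ and some $w\in C^{\perp}$; since $v\ne 0$ we have $w\ne 0$, hence $w\in\mathcal F$, and therefore $v=\Ad(g)w\in\mathcal F$ by the invariance just established. Thus $\mathcal F=\mathfrak g\setminus\{0\}$, i.e.\ $\theta$ is fat. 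Finally, since Theorem~\ref{thm-1_2}, and hence Corollary~\ref{cor-1_3}, holds for an arbitrary Cartan subalgebra $C$ of $\mathfrak g$, the equivalence holds with $C$ chosen freely; this is precisely the assertion of the corollary and settles L.~Florit and W.~Ziller's question.

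I do not expect a genuine obstacle here: the geometric content has been pushed entirely into Corollary~\ref{cor-1_3}, and what remains is the standard curvature transformation law together with the observation that $dR_g$ sends nondegenerate alternating $2$-forms to nondegenerate ones. If anything deserves emphasis, it is that the ``if'' direction rests squarely on Corollary~\ref{cor-1_3} --- it is precisely because $C^{\perp}$ contains a Cartan subalgebra (the nonobvious conclusion of Theorem~\ref{thm-1_2}) that $\Ad(G)\cdot C^{\perp}=\mathfrak g$, which is what makes fatness on $C^{\perp}$ equivalent to fatness on all of $\mathfrak g$.
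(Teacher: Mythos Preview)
Your proof is correct and follows essentially the same approach as the paper's proof of Theorem~\ref{thm-4_2_2}: the ``only if'' direction is trivial, and the ``if'' direction combines Corollary~\ref{cor-1_3} (that $G\cdot C^{\perp}=\mathfrak g$) with the $\Ad(G)$-invariance of the set of fat directions. The only difference is that you spell out the curvature computation establishing this invariance, whereas the paper simply cites it as a known fact from \cite{F-Z}.
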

Our second application of Corollary \ref{cor-1_3} is the following.
\begin{corollary} \label{cor-1_5} Let $L= \Lie(G)$ be the Lie algebra of a compact semisimple Lie group $G$.
Then the commutator map $L \times L \to L$ is open at $(0, 0)$. \end{corollary}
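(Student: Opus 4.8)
The plan is to exploit two elementary features of the commutator map $\mu\colon L\times L\to L$, $\mu(a,b)=[a,b]$: it is homogeneous of degree $2$, i.e.\ $\mu(ta,tb)=t^{2}[a,b]$, and it is $G$-equivariant, $\mu(\Ad(g)a,\Ad(g)b)=\Ad(g)[a,b]$, while each $\Ad(g)$ is an isometry for the negative of the Killing form. Since $d\mu_{(0,0)}=0$, openness at $(0,0)$ is \emph{not} a submersion statement; instead, by the degree-$2$ homogeneity, it suffices to find a constant $M>0$ with $\mu\bigl(B_{M}\times B_{M}\bigr)\supseteq B_{1}$ (open balls centred at $0$). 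Indeed, given any neighbourhood $U$ of $(0,0)$, pick $\eta>0$ with $B_{\eta}\times B_{\eta}\subseteq U$ and set $t=\eta/M$; then $\mu(U)\supseteq\mu(tB_{M}\times tB_{M})=t^{2}\,\mu(B_{M}\times B_{M})\supseteq B_{t^{2}}$, a neighbourhood of $0$. Moreover, again by homogeneity, to get $\mu(B_{M}\times B_{M})\supseteq B_{1}$ it is enough to write every \emph{unit} vector of $L$ as $[a,b]$ with $|a|,|b|<M$, since then for $0<|v|\le 1$ one has $v=|v|\,(v/|v|)=\bigl[\sqrt{|v|}\,a,\sqrt{|v|}\,b\bigr]$ with both factors of norm $<M$, and $0=[0,0]$.

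To produce the uniform bound, fix a Cartan subalgebra $C$ of $L$ and a regular element $x_{0}\in C$. Since $\ad x_{0}$ is skew-symmetric with respect to the negative of the Killing form, $L=\Ker(\ad x_{0})\oplus\operatorname{Im}(\ad x_{0})$ orthogonally; regularity of $x_{0}$ gives $\Ker(\ad x_{0})=C$, hence $[x_{0},L]=C^{\perp}$ and $\ad x_{0}$ restricts to a linear isomorphism of $C^{\perp}$. Let $K$ be the operator norm of the inverse of this restriction, so that every $w\in C^{\perp}$ equals $[x_{0},b]$ for some $b\in C^{\perp}$ with $|b|\le K|w|$. Now let $v\in L$ with $|v|=1$. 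By Corollary~\ref{cor-1_3} the $G$-orbit of $C^{\perp}$ is all of $L$, so $v=\Ad(g)w$ for some $g\in G$ and $w\in C^{\perp}$; as $\Ad(g)$ is an isometry, $|w|=1$. Choosing $b\in C^{\perp}$ with $[x_{0},b]=w$ and $|b|\le K$, we obtain
\[
v=\Ad(g)[x_{0},b]=\bigl[\Ad(g)x_{0},\,\Ad(g)b\bigr],\qquad |\Ad(g)x_{0}|=|x_{0}|,\quad |\Ad(g)b|\le K .
\]
Therefore $M:=\max(|x_{0}|,K)+1$ works: every unit vector of $L$ lies in $\mu(B_{M}\times B_{M})$, hence $\mu(B_{M}\times B_{M})\supseteq B_{1}$ by the scaling remark, and openness of $\mu$ at $(0,0)$ follows from the reduction above.

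I do not anticipate a genuine obstacle. The single point that requires care is the reduction in the first paragraph: recognising that, because $d\mu_{(0,0)}$ vanishes, openness at the origin must be extracted from a \emph{uniform} bound on commutator representations of unit vectors rather than from any rank/submersion argument. Once that is set up, the statement is immediate from Corollary~\ref{cor-1_3}, the orthogonality of the $\Ad$-action, and the elementary identity $[x_{0},L]=C^{\perp}$ for $x_{0}$ regular.
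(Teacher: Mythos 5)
Your proof is correct, and it follows essentially the same route as the one the paper relies on: the whole content is Corollary \ref{cor-1_3} (every $G$-orbit meets $C^{\perp}=[x_{0},L]$ for a fixed regular $x_{0}$), combined with $\Ad$-equivariance and the degree-$2$ homogeneity of the bracket to turn the resulting uniform bound on commutator representations of unit vectors into openness at the origin. The paper itself delegates these details to \cite{A-M} (Theorem 2.1), noting only that their argument needs just Lemma \ref{lemma-4_1_1}; your write-up supplies exactly that argument in self-contained form.
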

This fact was obtained in \cite{A-M} using their Lemma 2.2 which was proved by using some tables
about fundamental weights as explained above. (Recall that their Lemma 2.2 is our
Theorem \ref{thm-1_2} above). However, after a careful study of their proof, we found out that
one only needs the partial result stated in Corollary \ref{cor-1_3} above.
In section \ref{section5_1}, we make the following remark.

\begin{corollary} \label{corollary-1_6} Let $L$ be a compact semisimple Lie algebra, and let $a$ be a regular 
element of $L$. Then there exists a regular element $b$ in $L$, such that 
\[ L = [a, L] + [b, L] \]
and $a$, $b$ are orthogonal (in fact the centralizers of $a$ and $b$ in $L$ are also orthogonal). \end{corollary}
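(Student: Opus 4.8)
The plan is to reduce everything to Theorem~\ref{thm-1_2} by a short computation with the Killing form. First I would record the basic observation that, since $L$ is compact, the bilinear form $\langle x,y\rangle:=-\kappa(x,y)$ (with $\kappa$ the Killing form) is a positive-definite inner product on $L$, and for every $x\in L$ the operator $\ad(x)$ is skew-symmetric with respect to it, by invariance of $\kappa$. Consequently $L$ splits orthogonally as $\ker(\ad x)\oplus [x,L]$, i.e.
\[ [x,L] = \bigl(\mathfrak{z}(x)\bigr)^{\perp}, \]
where $\mathfrak{z}(x)$ denotes the centralizer of $x$ in $L$. In particular, writing $C:=\mathfrak{z}(a)$, which is a Cartan subalgebra because $a$ is regular, we get $[a,L]=C^{\perp}$.

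Next I would apply Theorem~\ref{thm-1_2} to the Cartan subalgebra $C$: there exists a Cartan subalgebra $C'$ of $L$ orthogonal to $C$, that is, $C\perp C'$. I would then take $b$ to be a regular element of $L$ lying in $C'$; such $b$ exists because the non-regular points of $C'$ are contained in a finite union of root hyperplanes, hence form a nowhere dense subset of $C'$. Since $C'$ is abelian we have $\mathfrak{z}(b)\supseteq C'$, and since $b$ is regular $\mathfrak{z}(b)$ is a Cartan subalgebra; comparing dimensions (both equal the rank of $L$) forces $\mathfrak{z}(b)=C'$. By the observation of the previous paragraph, $[b,L]=(C')^{\perp}$.

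Finally I would assemble the conclusion. Orthogonality of $C$ and $C'$ in the positive-definite space $(L,\langle\cdot,\cdot\rangle)$ forces $C\cap C'=\{0\}$, so
\[ [a,L]+[b,L] = C^{\perp}+(C')^{\perp} = (C\cap C')^{\perp} = \{0\}^{\perp} = L. \]
Moreover $a\in C$ and $b\in C'$ with $C\perp C'$, hence $a\perp b$, and the centralizers $\mathfrak{z}(a)=C$ and $\mathfrak{z}(b)=C'$ are orthogonal by the choice of $C'$. I do not expect a genuine obstacle here: once Theorem~\ref{thm-1_2} is available, the only points needing a word of justification are the identity $[x,L]=(\mathfrak{z}(x))^{\perp}$ for regular $x$ (skew-symmetry of $\ad x$) and the fact that a Cartan subalgebra always contains regular elements; the rest is linear algebra on the inner product $-\kappa$.
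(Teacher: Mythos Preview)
Your proof is correct and follows essentially the same approach as the paper's proof of Corollary~\ref{corollary-3_3}: both invoke Theorem~\ref{thm-1_2} (= Theorem~\ref{thm-3_1}) to obtain orthogonal Cartan subalgebras, identify $[x,L]$ with $Z(x)^{\perp}$ via Remark~\ref{remark-5_1_2}, and deduce $L=C^{\perp}+(C')^{\perp}$. Your version is slightly more streamlined in that you set $C=\mathfrak{z}(a)$ from the start and use the ``one can be chosen arbitrarily'' clause of Theorem~\ref{thm-1_2} directly, whereas the paper first proves the statement for an auxiliary pair $(x,y)$ and then conjugates to align with the given $a$; the content is the same.
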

More generally, we have
\begin{corollary} \label{corollary-1_7} Let $L = \mathfrak{k} \oplus \mathfrak{p}$ be a Cartan decomposition 
of a semisimple Lie algebra $L$ (where $\mathfrak{k}$ is a maximal compact Lie subalgebra) such that $L$ is non-Hermitian of 
full rank. That is, $\mathfrak{k}$ is semisimple and $\rank(\mathfrak{k}) = \rank(L)$. Let $a$ be a regular element 
of $\mathfrak{k}$. Then there exists a regular element $b$ in $\mathfrak{k}$, such that $L = [a, L] + [b, L]$ and 
$a$, $b$ are orthogonal. \end{corollary}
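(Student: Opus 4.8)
The plan is to reduce the surjectivity statement to a statement about centralizers and then to build $b$ from a Cartan subalgebra supplied by Theorem~\ref{thm-1_2}. Write $B$ for the Killing form of $L$. Since $B$ is nondegenerate and $\ad_x$ is skew-symmetric with respect to $B$ for every $x\in L$, the image of $\ad_x$ is the orthogonal complement of its kernel, i.e.\ $[x,L]=Z_L(x)^{\perp}$, where $Z_L(x)$ denotes the centralizer of $x$. Hence
\[ [a,L]+[b,L] = Z_L(a)^{\perp}+Z_L(b)^{\perp} = \bigl(Z_L(a)\cap Z_L(b)\bigr)^{\perp}, \]
so that $L=[a,L]+[b,L]$ is equivalent to $Z_L(a)\cap Z_L(b)=0$. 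Thus it suffices to produce a regular element $b\in\mathfrak{k}$ with $Z_L(a)\cap Z_L(b)=0$ and $B(a,b)=0$.

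Next I would exploit the equal-rank hypothesis. Since $a$ is regular in $\mathfrak{k}$, the centralizer $C:=Z_{\mathfrak{k}}(a)$ is a Cartan subalgebra of $\mathfrak{k}$, and $a\in C$. As $\rank(\mathfrak{k})=\rank(L)$, the complexification $C_{\mathbb{C}}$ is a toral subalgebra of $L_{\mathbb{C}}$ of dimension $\rank(L)$, hence a Cartan subalgebra of $L_{\mathbb{C}}$; so $C$ is a Cartan subalgebra of $L$. Now apply Theorem~\ref{thm-1_2} to $\mathfrak{k}$ — which is compact (the fixed-point algebra of a Cartan involution) and semisimple by hypothesis — choosing $C$ as the prescribed member of the orthogonal pair: this yields a Cartan subalgebra $C'$ of $\mathfrak{k}$ with $C\perp C'$ for the Killing form of $\mathfrak{k}$. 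Since every Cartan subalgebra of $\mathfrak{k}$ decomposes along the simple ideals of $\mathfrak{k}$, and both the Killing form of $\mathfrak{k}$ and $B|_{\mathfrak{k}}$ vanish between distinct simple ideals while being positive multiples of one another on each, the relation $C\perp C'$ holds for $B|_{\mathfrak{k}}$ as well. By equal rank again, $C'$ is a Cartan subalgebra of $L$.

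The crucial choice is to take $b$ inside $C'$ not merely regular in $\mathfrak{k}$ but regular in $L$: the elements of $C'$ that fail to be regular in $L$ lie in the union of the finitely many subspaces $\Ker(\alpha|_{C'})$ as $\alpha$ ranges over the roots of $(L_{\mathbb{C}},C'_{\mathbb{C}})$, each a proper subspace because no root vanishes on the Cartan subalgebra $C'_{\mathbb{C}}$; so a generic $b\in C'$ works, and it is a fortiori regular in $\mathfrak{k}$ since the roots of $(\mathfrak{k}_{\mathbb{C}},C'_{\mathbb{C}})$ form a subset of those of $(L_{\mathbb{C}},C'_{\mathbb{C}})$. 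For such $b$ one has $Z_L(b)=C'$. Since $a\in\mathfrak{k}$, the Cartan decomposition gives $Z_L(a)=Z_{\mathfrak{k}}(a)\oplus Z_{\mathfrak{p}}(a)=C\oplus Z_{\mathfrak{p}}(a)$, and intersecting with $C'\subseteq\mathfrak{k}$, using $Z_{\mathfrak{p}}(a)\subseteq\mathfrak{p}$ and $\mathfrak{k}\cap\mathfrak{p}=0$, yields $Z_L(a)\cap Z_L(b)=C\cap C'=0$ (the last equality because $B|_{\mathfrak{k}}$ is definite, so orthogonal subspaces meet only in $0$). Therefore $L=[a,L]+[b,L]$; moreover $B(a,b)=0$ because $a\in C$, $b\in C'$ and $C\perp C'$; and $b$ is regular in $\mathfrak{k}$. (The same bookkeeping, together with $\mathfrak{k}\perp_{B}\mathfrak{p}$, even gives $Z_L(a)\perp Z_L(b)$.)

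I expect the only point needing care to be the bilinear form in the application of Theorem~\ref{thm-1_2}: the orthogonality it provides is for the intrinsic Killing form of $\mathfrak{k}$, whereas we need it for the restriction of the Killing form of $L$, and these differ by a positive scalar on each simple ideal — reconciled by the decomposition of Cartan subalgebras along those ideals as above. Everything else is routine linear algebra in the root-space decomposition; the two ideas that make the argument run are that equal rank promotes a Cartan subalgebra of $\mathfrak{k}$ to a Cartan subalgebra of $L$, and that choosing $b$ regular in $L$ (not merely in $\mathfrak{k}$) disposes of the $\mathfrak{p}$-part of its centralizer.
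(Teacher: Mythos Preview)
Your proof is correct and follows essentially the same route as the paper's proof of Corollary~\ref{corollary-3_4}: obtain orthogonal Cartan subalgebras $C,C'$ of $\mathfrak{k}$ from Theorem~\ref{thm-1_2}, promote them to Cartan subalgebras of $L$ by the equal-rank hypothesis, transfer the orthogonality to the Killing form of $L$ (your simple-ideal argument is a variant of the paper's Remark~\ref{remark-3_5}), and use $[x,L]=Z_L(x)^{\perp}$. Your version is in fact a bit tidier in two respects: you set $C=Z_{\mathfrak{k}}(a)$ from the outset (exploiting the ``arbitrary choice'' clause of Theorem~\ref{thm-1_2}) instead of conjugating at the end, and your computation $Z_L(a)\cap C'=(C\oplus Z_{\mathfrak{p}}(a))\cap C'=C\cap C'=0$ cleanly accommodates the possibility that $a$ is regular in $\mathfrak{k}$ but not in $L$, a subtlety the paper's ``verbatim'' reference to the proof of Corollary~\ref{corollary-3_3} passes over.
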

In section \ref{section5_1}, we make the following trivial remark.
\begin{remark} \label{rem-1_8} The following two conjectures are equivalent. \begin{enumerate}
\item[1)] Every element $x$ in a real semisimple Lie algebra $L$ is the commutator of two elements where one
element can be chosen to be regular (hence semisimple).
\item[2)] Every element $x$ in a real semisimple Lie algebra $L$ is orthogonal to some Cartan subalgebra of $L$.
\end{enumerate} 
\end{remark}
In section \ref{section5_2}, we survey most of the results of D. Akhiezer in his recent interesting paper
\cite{Akh} that shows that the above conjecture is valid in many real simple Lie algebras (see Theorem \ref{thm-5_2_1})
In section \ref{section5_3}, we survey one result by G. Bergman and N. Nahlus in \cite{B-N} and another result in
preparation by the second author in \cite{N} related to $1.5$ generators of simple Lie algebras. For
example, in any simple Lie algebra $L$ over a field of characteristic $0$, and for any $x$ in $L\setminus\{0\}$,
there exists a regular (hence semisimple) element $y$ in $L$ such that $L=[L, x] + [L, y]$ (\cite{N}). This 
last property is related to the concept of $1.5$ generators as follows: if $x$ and $y$ generate $L$ as a 
Lie algebra then $L = [L,x] + [L,y]$ by \cite[Lemma 25c]{Bor1}.

Finally in section \ref{section6}, we construct explicit examples of orthogonal Cartan subalgebras in the cases
of $\mathfrak{su}(n)$, $\mathfrak{sp}(n)$, based on the idea of circulant matrices (cf. \cite{K-S}), and in 
the case of $\mathfrak{so}(n)$.

\acknowledge{The authors are grateful to Anthony Knapp for his help in answering some questions about the 
general theory of non-compact semisimple Lie algebras. We are also indebted to Karl-Hermann Neeb for his encouragement 
and interesting discussions about two results in the paper. Finally, we are also indebted to Wolfgang Ziller for 
kindly pointing out that our Corollary \ref{cor-1_3} answers a question about fatness in principal bundles.}

\section{Goto's theorem by Coxeter transformations} \label{section2}

In this section, we give a proof of Goto's theorem, additive version, by using Coxeter transformations.

Let $L = \Lie(G)$ be the Lie algebra of a compact semisimple Lie group. Let $C$ be a maximal toral subalgebra of $L$. 
Let $\Sigma \subseteq C^*$ (the dual of $C$) be the root system of $G$ with respect to $C$ and we fix an 
ordering of the roots, with $\Delta$ as the set of positive roots, and let $\{a_1,\ldots,a_n\}$ be the set of 
simple positive roots.

\begin{lemma} \label{lemma-2_1} Let $\{s_1,\ldots,s_n\}$ be the Weyl reflections corresponding to the simple positive roots 
$\{a_1,\ldots,a_n\}$. Then, for any permutation of $\{1,\ldots,n\}$, the Coxeter transformation 
$c=s_1 s_2 \cdots s_n$ has no fixed points, other than $0$. That is, $1$ is not an eigenvalue of $c$. 
\end{lemma}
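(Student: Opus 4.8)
The plan is to argue directly in $C^*$, equipped with the positive-definite inner product $\langle\cdot,\cdot\rangle$ that the negative of the Killing form induces on $C^*$. With respect to this form the simple roots $a_1,\dots,a_n$ form a basis of $C^*$, and each $s_i$ is the orthogonal reflection $s_i(\lambda)=\lambda-\frac{2\langle\lambda,a_i\rangle}{\langle a_i,a_i\rangle}\,a_i$. Since $c$ is invertible, its spectrum on $C^*$ agrees with that on $C$, so it suffices to show $c$ has no nonzero fixed vector in $C^*$; and since we may relabel the simple roots, it suffices to treat $c=s_1 s_2\cdots s_n$.

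First I would take $\lambda\in C^*$ with $c\lambda=\lambda$ and introduce the partial products $u_k:=(s_{k+1}s_{k+2}\cdots s_n)(\lambda)$ for $k=0,1,\dots,n$, so that $u_n=\lambda$, $u_0=c\lambda=\lambda$, and $u_{k-1}=s_k(u_k)$. The reflection formula gives $u_{k-1}-u_k=-t_k\,a_k$ with $t_k:=\frac{2\langle u_k,a_k\rangle}{\langle a_k,a_k\rangle}$. Summing this telescoping identity over $k=1,\dots,n$ yields $u_0-u_n=-\sum_{k=1}^n t_k\,a_k$; since $u_0=u_n$, we conclude $\sum_{k=1}^n t_k\,a_k=0$.

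The key step is to feed this into the linear independence of the simple roots: it forces $t_k=0$ for every $k$, that is $\langle u_k,a_k\rangle=0$, whence $s_k(u_k)=u_k$ and therefore $u_{k-1}=u_k$ for all $k$. Thus $\lambda=u_n=u_{n-1}=\cdots=u_0$, and consequently $\langle\lambda,a_k\rangle=\langle u_k,a_k\rangle=0$ for every $k$. Since $\{a_1,\dots,a_n\}$ is a basis of $C^*$, this gives $\lambda=0$, which is precisely the assertion that $c$ has no fixed point other than $0$, equivalently that $1$ is not an eigenvalue of $c$.

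I do not anticipate a genuine obstacle here; the one point requiring care is the bookkeeping — choosing the partial products $u_k$ so that the telescoping sum collapses to $u_0-u_n$ and then invokes exactly the hypothesis $c\lambda=\lambda$ — together with the remark that nothing in the argument depends on the order in which the $s_i$ are multiplied, which is what makes it valid for every permutation. One could instead quote the classical description of the spectrum of a Coxeter element in terms of the exponents $m_j$, which satisfy $0<m_j<h$ with $h$ the Coxeter number, but that invokes considerably more structure than needed: the argument above rests only on the reflection formula and the linear independence of the simple roots, so that is the route I would take.
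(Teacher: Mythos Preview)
Your argument is correct and is essentially the very proof the paper cites: the paper does not give its own proof but refers to Humphreys, \emph{Reflection groups and Coxeter groups}, p.~76, where the argument is precisely the telescoping computation you wrote out---each $s_k$ moves the running vector by a multiple of $a_k$, the assumption $c\lambda=\lambda$ forces the resulting linear combination of simple roots to vanish, linear independence kills each coefficient, and hence $\lambda$ is orthogonal to every simple root. One tiny quibble: the reason the spectrum on $C$ and on $C^*$ coincide is not merely that $c$ is invertible, but that a linear map and its transpose have the same eigenvalues (equivalently, the Weyl group action on $C$ and on $C^*$ are intertwined by the Killing-form isomorphism); this does not affect the validity of your proof.
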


See the very short proof in \cite{Hum2}, p.$76$.

\begin{lemma} \label{lemma-2_2} (cf. \cite{H-M1}, Lemma $6.53$) Let $L = \Lie(G)$ be the Lie algebra of a compact 
semisimple Lie group. Let $C = \Lie(T)$ be a maximal toral subalgebra. Let $c$ be any Coxeter transformation 
with respect to $(L,C)$ as in lemma \ref{lemma-2_1}. $c = \Ad(n)$ for some $n \in N(T)$, the normalizer of 
$T$ in $G$, since $c$ is in the Weyl group of $(G,T)$. Then
\[ \Ad(n)|_C - \Id: C \to C \]
is an isomorphism.
\end{lemma}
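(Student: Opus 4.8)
The plan is to recognize $\Ad(n)|_C$ as nothing but the Coxeter transformation $c$ acting linearly on $C$, and then to read off from Lemma \ref{lemma-2_1} that $1$ is not one of its eigenvalues; since $C$ is finite dimensional this immediately forces $\Ad(n)|_C - \Id$ to be invertible.

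First I would note that because $n$ lies in $N(T)$, conjugation by $n$ preserves $T$ and hence $\Ad(n)$ preserves $C = \Lie(T)$; moreover $\Ad(t)|_C = \Id$ for every $t \in T$ since $T$ is abelian, so $\Ad(\cdot)|_C$ descends to a faithful action of $W(G,T) = N(T)/T$ on $C$, and under this action $\Ad(n)|_C$ is exactly the endomorphism of $C$ given by $c = s_1 \cdots s_n$. Thus the claim is equivalent to the assertion that $c - \Id \colon C \to C$ is an isomorphism.

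Next I would transport the eigenvalue information of Lemma \ref{lemma-2_1} onto $C$. In the setup of Section \ref{section2} the root system and the reflections $s_i$ are realized on $C^*$, so Lemma \ref{lemma-2_1} literally tells us that $1$ is not an eigenvalue of $c$ acting on $C^*$. The negative of the Killing form provides a $W$-equivariant linear isomorphism $C \xrightarrow{\ \sim\ } C^*$ (equivalently: $\Ad(n)|_C$ is orthogonal for this form, so its eigenvalues on $C$ are the inverses, hence the complex conjugates, of those on $C^*$); either way the spectrum of $c$ on $C$ coincides with its spectrum on $C^*$, so $1$ is not an eigenvalue of $c$ on $C$ either.

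Finally, $\Ker(\Ad(n)|_C - \Id) = \{\, x \in C : \Ad(n)x = x \,\}$ is the real eigenspace of $c$ for the eigenvalue $1$, which we have just seen is $\{0\}$; hence $\Ad(n)|_C - \Id$ is an injective endomorphism of the finite-dimensional real vector space $C$, therefore bijective, i.e.\ an isomorphism. I do not expect a serious obstacle here: the only point requiring a little care is the bookkeeping in the previous paragraph, namely making sure Lemma \ref{lemma-2_1} is applied on the correct space and observing that ``no nonzero fixed vector'' is the same as ``$1$ is not an eigenvalue'' over $\mathbb{R}$.
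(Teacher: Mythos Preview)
Your proposal is correct and is essentially the same argument as the paper's, which simply says ``Immediate from Lemma \ref{lemma-2_1}.'' You have merely spelled out the bookkeeping (that $\Ad(n)|_C$ realizes $c$ on $C$, and that the eigenvalue statement transfers between $C$ and $C^*$) that the paper leaves implicit.
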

\begin{proof} Immediate from lemma \ref{lemma-2_1}. \end{proof}
\begin{theorem} \label{thm-2_3} In the setting of lemma \ref{lemma-2_2}, suppose $n=\exp(N)$ for some $N$ in $L = \Lie(G)$ (which 
exists since the exponential map is surjective in connected compact semisimple Lie groups). Then $C$ is contained 
in $[L,N]$. \end{theorem}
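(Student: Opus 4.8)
The plan is to exploit the isomorphism from Lemma~\ref{lemma-2_2} together with the identity expressing $\Ad(\exp N) - \Id$ as a power series in $\ad N$. First I would observe that on $C$ we have $\Ad(n)|_C - \Id = \Ad(\exp N)|_C - \Id$, and that for any $X \in L$,
\[ \Ad(\exp N)X - X = \bigl(e^{\ad N} - \Id\bigr)X = \ad(N)\Bigl(\sum_{k\ge 1} \frac{(\ad N)^{k-1}}{k!}\Bigr)X. \]
Writing $\Phi = \sum_{k\ge 1} \frac{(\ad N)^{k-1}}{k!} \in \mathrm{End}(L)$, this says $\Ad(\exp N) - \Id = (\ad N)\circ \Phi$ as operators on $L$. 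In particular the image of $\Ad(n)-\Id$ on all of $L$ is contained in $\ad(N)(L) = [L,N]$.

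Next I would use Lemma~\ref{lemma-2_2}: the restriction $(\Ad(n) - \Id)|_C : C \to C$ is an \emph{onto} map (it is an isomorphism of $C$). Therefore every element $c \in C$ can be written as $c = (\Ad(n) - \Id)(c')$ for some $c' \in C \subseteq L$. Combining with the previous paragraph, $c = (\ad N)(\Phi(c')) \in [L,N]$. Since $c \in C$ was arbitrary, this gives $C \subseteq [L,N]$, which is exactly the claim.

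The one point that needs a little care — and the only place I expect any friction — is the interchange of the ``restricted'' and ``ambient'' pictures: $\Ad(n)$ preserves $C$ (since $n$ normalizes $T$, hence normalizes $C = \Lie(T)$), so the operator $\Ad(n)-\Id$ genuinely maps $C$ into $C$, and Lemma~\ref{lemma-2_2} tells us this map is bijective on $C$; but the factorization $(\Ad(n)-\Id) = (\ad N)\circ\Phi$ is an identity of operators on the whole of $L$, and $N$ itself need not lie in $C$. That is fine: we only need surjectivity of $(\Ad(n)-\Id)|_C$ onto $C$ together with the fact that every vector in the image of $\Ad(n)-\Id$ on $L$ lies in $[L,N]$. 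No commutativity between $\Phi$ and the restriction to $C$ is required, because we apply $\Phi$ after having already solved $c = (\Ad(n)-\Id)c'$ inside $C$. Hence the argument is essentially immediate once the power-series factorization is written down.
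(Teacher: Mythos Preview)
Your proof is correct and is essentially the same argument as the paper's: both use Lemma~\ref{lemma-2_2} to write an arbitrary $x \in C$ as $(\Ad(n)-\Id)(t)$ for some $t \in C$, then factor $e^{\ad N}-\Id = (\ad N)\circ\Phi$ to conclude $x \in [N,L]$. You have simply made the factorization explicit by writing down $\Phi = \sum_{k\ge 1}(\ad N)^{k-1}/k!$, whereas the paper just says ``by factorization through writing the exponential series as the limit of its partial sums.''
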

\begin{proof} Let $x \in C = \Lie(T)$. Then by lemma \ref{lemma-2_2}, $x = \Ad(n)(t) - t$ for some $t$ in $C$. But the exponential map 
is surjective in compact connected semisimple Lie groups, so $n = \exp(N)$, for some $N \in L$. Hence 
\[ x = \Ad(n)(t) - t = \Ad(\exp(N))(t) - t = \exp(\ad(N))(t) - t \]
But $x = \exp(\ad(N))(t) - t$ can be written as $[N, y(x)]$, for some $y(x) \in L$ (by factorization) through 
writing the exponential series as the limit of its partial sums. Hence $x = [N, y(x)]$.
\end{proof}
Next we strengthen Theorem \ref{thm-2_3} to replace $N$ by a regular element of $L$.

\begin{theorem} \label{thm-2_4} Let $L = \Lie(G)$ be the Lie algebra of a compact semisimple Lie group $G$, 
and let $C$ be a Cartan subalgebra of $L$. Then there exists a regular element $a$ of $L$ such that 
$C \subseteq [a, L]$. In particular, every element $x$ of $L$ can be written as $x = [a, b]$ where $a$ can be chosen to be regular.
\end{theorem}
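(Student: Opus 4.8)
The plan is to promote Theorem \ref{thm-2_3}, whose element $N$ need not be regular, to a regular element, relying on two standard features of a compact semisimple Lie algebra $L$ with the invariant inner product $\langle\,\cdot\,,\,\cdot\,\rangle$ equal to minus its Killing form. First, each operator $\ad a$ is skew-symmetric, so $(\ad a)^{\ast}=-\ad a$ and therefore
\[
[a,L]=\operatorname{im}(\ad a)=\bigl(\Ker(\ad a)^{\ast}\bigr)^{\perp}=\mathfrak{z}(a)^{\perp},\qquad \mathfrak{z}(a):=\{y\in L:[a,y]=0\};
\]
in particular $C\subseteq[a,L]$ is equivalent to $\mathfrak{z}(a)\subseteq C^{\perp}$. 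Second, the centralizer $\mathfrak{z}(x)$ of any $x\in L$ is a reductive subalgebra of $L$ of maximal rank: $x$ lies in some maximal toral subalgebra $C_0$ of $L$, and $C_0\subseteq\mathfrak{z}(x)$ forces every maximal toral subalgebra of $\mathfrak{z}(x)$ to have dimension $\rank L$, hence to be a Cartan subalgebra of $L$.

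Granting these, I would proceed as follows. Apply Theorem \ref{thm-2_3} to the given Cartan subalgebra $C$ to obtain $N\in L$ with $C\subseteq[N,L]$; by the first observation this means $\mathfrak{z}(N)\subseteq C^{\perp}$. By the second observation, pick a maximal toral subalgebra $C'$ of $\mathfrak{z}(N)$; it is a Cartan subalgebra of $L$, and $C'\subseteq\mathfrak{z}(N)\subseteq C^{\perp}$, so $C'\perp C$, equivalently $C\subseteq(C')^{\perp}$. Now choose any $a\in C'$ avoiding the (finitely many) root hyperplanes of $(L,C')$, so that $\mathfrak{z}(a)=C'$; such an $a$ is regular in $L$ and $[a,L]=(C')^{\perp}\supseteq C$, which is the first assertion. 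For the remaining claim, any $x\in L$ lies in some maximal toral subalgebra $C$, and then with $a$ as just constructed for this $C$ we get $x\in C\subseteq[a,L]$, i.e. $x=[a,b]$ for some $b\in L$, with $a$ regular.

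The one point that goes beyond Theorem \ref{thm-2_3} is the passage from the possibly highly degenerate $N$ to a regular element, and I expect it to present no serious obstacle once viewed correctly: a direct perturbation of $N$ is awkward because the condition $C\subseteq[a,L]$, i.e. $\mathfrak{z}(a)\subseteq C^{\perp}$, is not manifestly open in $a$, but the centralizer $\mathfrak{z}(N)$ --- although not itself a Cartan subalgebra --- is a full-rank reductive subalgebra contained in $C^{\perp}$, so \emph{any} of its maximal toral subalgebras is simultaneously a Cartan subalgebra of $L$ and orthogonal to $C$. The only things to be careful about are the rank bookkeeping for $\mathfrak{z}(N)$ and the skew-symmetry identity $[a,L]=\mathfrak{z}(a)^{\perp}$. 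I would also remark that this argument exhibits Theorem \ref{thm-2_4} as essentially equivalent to the existence of a Cartan subalgebra orthogonal to a prescribed one, that is, to Theorem \ref{thm-1_2}.
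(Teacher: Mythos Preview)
Your argument is correct and follows essentially the same route as the paper: starting from Theorem~\ref{thm-2_3}, one passes from $N$ to a Cartan subalgebra $C'\subseteq Z(N)$ (equivalently, a Cartan subalgebra of $L$ containing the semisimple element $N$), picks a regular $a$ with $Z(a)=C'$, and uses $[a,L]=Z(a)^{\perp}\supseteq Z(N)^{\perp}=[N,L]\supseteq C$. Your closing observation that this is tantamount to producing a Cartan subalgebra orthogonal to $C$ is exactly how the paper then deduces Theorem~\ref{thm-3_1}.
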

\noindent \textit{Comment:} for comparison, the proof by Kostant's convexity Theorem in the Appendix shows that $C^{\perp} \subseteq [x, L]$ 
for some regular element $x$ of $L$.
  
\begin{proof} By Theorem \ref{thm-2_3}, $C \subseteq [N, L]$. Since $N$ is semisimple, $N$ is contained in a Cartan subalgebra 
$C’$ of $L$, so $C’ \subseteq Z(N)$. 
Now $C’=Z(a)$ for some regular element $a$ of $L$.
So $Z(a)  \subseteq Z(N)$.  
Hence $Z(N)^{\perp} \subseteq Z(a)^{\perp}$.
But $Z(N)^{\perp} = [N, L]$ and  $Z(a)^{\perp} = [a, L]$ by Remark \ref{remark-5_1_2}.
Hence $[N, L] \subseteq [a, L]$. So $C \subseteq [a, L]$ as desired.
The last conclusion follows from the fact that every element of $L$ belongs to a Cartan subalgebra, 
and all Cartan subalgebras of $L$ are conjugate. \end{proof}

\begin{example} \label{ex-2_5} Consider $\mathfrak{su}(2k+1)$, which is a compact semisimple Lie algebra. It 
can be shown by a simple calculation that 
\[ \mathbf{n} = \left( \begin{array}{ccccc} 0 & 0 & \cdots & 0 & 1 \\
1 & 0 & \cdots & 0 & 0 \\
\vdots & \vdots & \ddots & \vdots & \vdots \\
0 & 0 & \cdots & 0 & 0 \\
0 & 0 & \cdots & 1 & 0 \end{array} \right) \]
is an element of $N(T)$ which gives modulo $T$ a Coxeter transformation of $SU(2k+1)$. Consider the unitary change of basis matrix 
$\mathbf{g} = (\mathbf{g}_{ab})$ for $1 \leq a,b \leq 2k+1$, with
\[ \mathbf{g}_{ab} = \frac{1}{\sqrt{2k+1}}\gamma_k^{(a-1)(b-1)} \]
where $\gamma_k = \exp(\frac{2 \pi i}{2k+1})$. Let $\mathbf{D} = \diag(1, \bar{\gamma}, \bar{\gamma}^2, \cdots, \bar{\gamma}^{2k})$. 
Then a simple calculation shows that $\mathbf{n} = \mathbf{g} \mathbf{D} \mathbf{g}^{-1}$. So we let 
\[ \mathbf{\Lambda} = \diag(0 , -ic_k, -2ic_k, \cdots, -kic_k, kic_k, (k-1)ic_k,\cdots, ic_k ), \] 
where $c_k = \frac{2\pi}{2k+1}$. It is clear that $\Lambda$ is an element of $\mathfrak{su}(2k+1)$ and that 
$\exp(\mathbf{\Lambda}) = \mathbf{D}$. Hence we obtain that 
\[ \mathbf{N} = \mathbf{g} \mathbf{\Lambda} \mathbf{g}^{-1} \]
is an element of $\mathfrak{su}(2k+1)$ which satisfies $\exp(\mathbf{N}) = \mathbf{n}$.  
\end{example}
\begin{example} \label{ex-2_6} The case of $\mathfrak{su}(2k)$ can be dealt with in a similar fashion. We obtain that
\[ \mathbf{n} = \left( \begin{array}{ccccc} 0 & 0 & \cdots & 0 & -1 \\
1 & 0 & \cdots & 0 & 0 \\
\vdots & \vdots & \ddots & \vdots & \vdots \\
0 & 0 & \cdots & 0 & 0 \\
0 & 0 & \cdots & 1 & 0 \end{array} \right) \]
is an element of $N(T)$ which gives modulo $T$ a Coxeter transformation of $SU(2k)$. Consider the unitary change of basis given by
$\mathbf{g} = (\mathbf{g}_{ab})$, where
\[ \mathbf{g}_{ab} = \frac{1}{\sqrt{2k}} \beta_k^{(a-1)(2b-1)} \]
where $\beta_k = \exp(\frac{\pi i}{2k})$. Also, let $\mathbf{D} = (\mathbf{D}_{ab})$, where
\[ \mathbf{D}_{ab} = \exp \left(- \frac{(2a-1)\pi i}{2k} \right) \delta_{ab} \]
Then a simple calculation shows that $\mathbf{n} = \mathbf{g} \mathbf{D} \mathbf{g}^{-1}$. Finally, let 
\[ \mathbf{\Lambda} = \diag(- id_k, - 3id_k, \cdots, - (2k-1)id_k, (2k-1)id_k, 
(2k-3)id_k, \cdots, id_k) \]
where $d_k = \frac{\pi}{2k}$.
As in example \ref{ex-2_5}, $\mathbf{N} = \mathbf{g} \mathbf{\Lambda} \mathbf{g}^{-1}$ is an element in $\mathfrak{su}(2k)$ 
satisfying $\mathbf{n} = \exp(\mathbf{N})$.
\end{example}

\section{Orthogonal Cartan subalgebras} \label{section3}

In this section, we prove the following non-trivial theorem which was proved in \cite{A-M} lemma 2.2, via the 
classification tables of all fundamental weights. Their proof relies on the observation found in the classification 
tables \cite{Bor2}, that the highest root is equal or twice some fundamental weight, in all simple 
Lie algebras except those of type $A_n$. It would be nice to have a more direct argument for this nice result, and this is what 
we shall present in this section.

\begin{theorem} \label{thm-3_1} Let $L = \Lie(G)$ be the Lie algebra of a compact semisimple Lie group. Then $L$ has orthogonal 
Cartan subalgebras (where one of them can be chosen arbitrarily)
\end{theorem}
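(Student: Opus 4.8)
The plan is to obtain Theorem~\ref{thm-3_1} as an essentially immediate consequence of Theorem~\ref{thm-2_4}. The one extra ingredient is the identity $[a,L] = Z(a)^{\perp}$, where $Z(a)$ is the centralizer in $L$ of an element $a$ and $\perp$ is taken with respect to the negative of the Killing form; this is Remark~\ref{remark-5_1_2}, already invoked in the proof of Theorem~\ref{thm-2_4}.

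First I would fix an arbitrary Cartan subalgebra $C$ of $L$ --- this is the one the statement allows us to prescribe. Theorem~\ref{thm-2_4}, applied to $C$, yields a regular element $a \in L$ with $C \subseteq [a,L]$. Since $L = \Lie(G)$ with $G$ compact semisimple, every element of $L$ is semisimple, so the centralizer of a regular element is a maximal toral subalgebra; put $C' := Z(a)$, a Cartan subalgebra of $L$. Rewriting the inclusion via $[a,L] = Z(a)^{\perp}$ gives $C \subseteq (C')^{\perp}$, i.e.\ the negative Killing form vanishes identically on $C \times C'$. Thus $C$ and $C'$ are orthogonal Cartan subalgebras, and as $C$ was arbitrary, the theorem follows. (Unpacking the input: if $c = \Ad(n)$ is any Coxeter transformation for $(L,C)$ and $n = \exp(N)$, then Theorem~\ref{thm-2_3} gives $C \subseteq [N,L] = Z(N)^{\perp}$; choosing $a$ to be a regular element of a Cartan subalgebra through $N$ forces $Z(a) \subseteq Z(N)$ and hence $C \subseteq Z(N)^{\perp} \subseteq Z(a)^{\perp}$.)

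I do not expect a genuine obstacle here: the real content has already been isolated in Section~\ref{section2}. The one subtlety --- and the reason Theorem~\ref{thm-2_4} rather than the bare Theorem~\ref{thm-2_3} is the right tool --- is that the logarithm $N$ of a Coxeter representative $n \in N(T)$ need not be regular, so $Z(N)$ need not itself be a Cartan subalgebra; shrinking to the centralizer of an honest regular element inside a Cartan subalgebra through $N$ costs nothing and repairs this. The containment $C' \subseteq C^{\perp}$ is not dimensionally obstructed, since $\dim C' = \rank L \le \dim L - \rank L = \dim C^{\perp}$. Finally, for $L$ semisimple but not simple one notes that the Killing form is block diagonal along the decomposition of $L$ into simple ideals and that Cartan subalgebras decompose accordingly, so it is enough to construct the orthogonal mate one simple factor at a time; this is routine bookkeeping.
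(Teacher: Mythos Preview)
Your proposal is correct and follows essentially the same approach as the paper's own proof: fix an arbitrary Cartan subalgebra $C$, invoke Theorem~\ref{thm-2_4} to get a regular $a$ with $C \subseteq [a,L]$, and then use $[a,L] = Z(a)^{\perp}$ (Remark~\ref{remark-5_1_2}) to conclude that $C$ and the Cartan subalgebra $Z(a)$ are orthogonal. Your additional remarks (on why Theorem~\ref{thm-2_4} rather than Theorem~\ref{thm-2_3} is the natural input, the dimension sanity check, and the reduction to simple factors) are accurate commentary but not needed for the argument.
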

\begin{proof} Let $C$ be a Cartan subalgebra of $L$. Then by Theorem \ref{thm-2_4},
$C \subseteq [a, L]$ for some regular element $a$ of $L$. Since $a$ regular in $L$, $Z(a)$ is a Cartan subalgebra 
of $L$. Moreover, $Z(a)^{\perp} = [a, L]$ by Remark \ref{remark-5_1_2}.
Hence $C \subseteq Z(a)^{\perp}$. Thus the Cartan subalgebras $C$ and $Z(a)$ are orthogonal.
Finally, $C$ can be chosen arbitrarily since all Cartan subalgebras (in Lie algebras of compact semisimple Lie groups) 
are conjugate and the Killing form is invariant under automorphisms of $L$ \cite[Prop. 1.119]{Knp}. This proves 
the theorem.
\end{proof}

\begin{theorem} \label{thm-3_2} Let $L = \Lie(G)$ be the Lie algebra of a compact semisimple Lie group. 
Let $C$ be a given Cartan subalgebra, and  let $C^{\perp}$ be the orthogonal of $C$ 
(with respect to the negative of the Killing form on $L$). Then for every element $x$ of $L$,  
there exists an element $g$ in $G$ , such that $g.x$ belongs to $C^{\perp}$. That is, $C^{\perp}$ intersects every $G$-orbit of $L$.
\end{theorem}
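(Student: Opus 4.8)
The plan is to obtain this as a short corollary of Theorem \ref{thm-3_1}, together with two standard facts about the Lie algebra $L$ of a compact semisimple Lie group: every element of $L$ is contained in some Cartan (maximal toral) subalgebra, and any two Cartan subalgebras of $L$ are conjugate under the adjoint action of $G$.

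First I would apply Theorem \ref{thm-3_1}, taking for the Cartan subalgebra ``which can be chosen arbitrarily'' precisely the given $C$. This yields a Cartan subalgebra $C'$ of $L$ orthogonal to $C$; since the negative of the Killing form is a nondegenerate symmetric bilinear form, the relation $C' \perp C$ is the same as the inclusion $C' \subseteq C^{\perp}$.

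Now let $x \in L$ be arbitrary. Choose a Cartan subalgebra $C''$ of $L$ with $x \in C''$. Because all Cartan subalgebras of $L$ are conjugate under $G$, there is $g \in G$ with $g.C'' = C'$, and then $g.x \in C' \subseteq C^{\perp}$. This is exactly the claim, and it also shows that the $G$-orbit of $C^{\perp}$ is all of $L$, recovering Corollary \ref{cor-1_3}.

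The argument carries no genuine difficulty once Theorem \ref{thm-3_1} is available — that is where all the work lies; alternatively, as noted in the introduction, one can bypass Theorem \ref{thm-3_1} and derive the statement directly from the (compact-group version of) Kostant's convexity theorem. The only points to watch in the route above are formal: that orthogonality of Cartan subalgebras is a symmetric relation, so that $C' \perp C$ may be read as the inclusion $C' \subseteq C^{\perp}$ (putting $C'$, not $C$, inside the orthogonal complement); and that $g.x$ is understood as $\Ad(g)(x)$, under which the Cartan subalgebras of $L$ form a single orbit.
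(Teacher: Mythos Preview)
Your argument is correct and is essentially identical to the paper's own proof: the paper also applies Theorem \ref{thm-3_1} to obtain $C'$ orthogonal to $C$, then uses that every $x$ can be conjugated into $C'$ (equivalently, $x$ lies in some Cartan subalgebra and all Cartan subalgebras are $G$-conjugate) to conclude $g.x \in C' \subseteq C^{\perp}$. You have simply spelled out the intermediate Cartan subalgebra $C''$ explicitly where the paper leaves this implicit.
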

\begin{proof} By Theorem \ref{thm-3_1}, there exists a Cartan subalgebra $C’$ which is orthogonal to $C$.
For every element $x$ in $L$, we know that there exist a $g$ in $G$ such that $g.x \in C’$.
Hence $g.x$ is orthogonal to $C$, so $g.x$ belongs to $C^{\perp}$. Thus, $C^{\perp}$ intersects every $G$-orbit in $L$. 
(Or equivalently, $L$ is the union of all $g.C^{\perp}$ as $g$ varies over $G$). \end{proof} 

As noted in the introduction, Theorem \ref{thm-3_2} can be obtained almost immediately from Kostant's 
Convexity Theorem (See Appendix, Theorem \ref{thm-A_3}).

\begin{corollary} \label{corollary-3_3} Let $L$ be a compact semisimple Lie algebra and let $a$ be a given 
regular element of $L$. Then there exists a regular element $b$ in $L$, such that
$L = [a, L] + [b, L]$ and $b$ is orthogonal to $a$ (under the negative of the Killing form of $L$).
\end{corollary}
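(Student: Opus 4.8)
The plan is to read this off from Theorem~\ref{thm-3_1} together with Remark~\ref{remark-5_1_2}, with essentially no extra work. Since $a$ is regular, $C := Z(a)$ is a Cartan subalgebra of $L$, and Remark~\ref{remark-5_1_2} gives $[a,L] = Z(a)^{\perp} = C^{\perp}$. By Theorem~\ref{thm-3_1} there is a Cartan subalgebra $C'$ of $L$ that is orthogonal to $C$, i.e. $C' \subseteq C^{\perp}$.

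Next I would pick $b$ to be a regular element lying in $C'$. Such a $b$ exists because the regular elements of $L$ contained in a fixed Cartan subalgebra $C'$ are exactly the $h \in C'$ with $\alpha(h) \neq 0$ for every root $\alpha$ of $(L,C')$, and these form a dense (nonempty) open subset of $C'$; any such $h$ satisfies $Z(h) = C'$. Fixing such a $b$, Remark~\ref{remark-5_1_2} gives $[b,L] = Z(b)^{\perp} = (C')^{\perp}$. Moreover $b \in C' \subseteq C^{\perp}$, so $b$ is orthogonal to every element of $C = Z(a)$; in particular $b$ is orthogonal to $a$ (and, more than that, $Z(a) \perp Z(b)$, matching Corollary~\ref{corollary-1_6}).

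It then remains to check $L = [a,L] + [b,L] = C^{\perp} + (C')^{\perp}$. Since the negative of the Killing form is positive definite, hence nondegenerate, on $L$, for any subspaces $U,V$ one has $U^{\perp} + V^{\perp} = (U \cap V)^{\perp}$. Therefore $C^{\perp} + (C')^{\perp} = (C \cap C')^{\perp}$, and $C \cap C' \subseteq C \cap C^{\perp} = \{0\}$ by definiteness, so $C^{\perp} + (C')^{\perp} = \{0\}^{\perp} = L$, as desired.

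There is no real obstacle here: the corollary is just a repackaging of Theorem~\ref{thm-3_1}, translated through the identity $[x,L] = Z(x)^{\perp}$. The only point deserving a remark is the existence of a regular element inside the orthogonal Cartan subalgebra $C'$, which is immediate from the density of regular elements in any Cartan subalgebra.
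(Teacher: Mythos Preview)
Your proof is correct and follows essentially the same route as the paper: invoke Theorem~\ref{thm-3_1} to get an orthogonal pair of Cartan subalgebras, translate via Remark~\ref{remark-5_1_2} to $[x,L]=Z(x)^{\perp}$, and conclude $L=C^{\perp}+(C')^{\perp}$. Your version is in fact slightly more streamlined: you use the ``one of them can be chosen arbitrarily'' clause of Theorem~\ref{thm-3_1} to set $C=Z(a)$ from the start, whereas the paper first proves the statement for some regular $x\in C$ and then conjugates by an inner automorphism $f$ with $f(a)\in C$ to pass from $x$ to the prescribed $a$; your linear-algebra identity $U^{\perp}+V^{\perp}=(U\cap V)^{\perp}$ replaces the paper's observation $L=D\oplus D^{\perp}\subseteq C^{\perp}+D^{\perp}$.
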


\begin{proof} By Theorem \ref{thm-3_1},  there exist orthogonal Cartan subalgebras $C$ and  $D$ in $L$, so
$D \subseteq C^{\perp}$. Since $L = D \oplus D^{\perp}$,  it follows that 
$L = C^{\perp} + D^{\perp}$. Let $x$ be any regular element of $L$ in $C$ and let $y$ be any regular element of 
$L$ in $D$, so $C = Z(x)$ and $D = Z(y)$. Thus $L = Z(x)^{\perp} + Z(y)^{\perp}$.
But $Z(x)^{\perp} =[x, L]$ and $Z(y)^{\perp} = [y, L]$ by Remark \ref{remark-5_1_2}. 
Hence $L = [x, L] + [y, L]$. Note that $x$ and $y$ are regular in $L$ and  orthogonal (since they belong to 
$C$ and $D$ which are orthogonal).

The given regular (semisimple) element $a$ belongs to a Cartan subalgebra and all Cartan subalgebras in $L$ 
are conjugate. Hence there exists an inner automorphism $f$ such that $f(a)$ is in $C$.
Since $a$ and hence $f(a)$ is regular in $L$, the element $x$ in the previous paragraph can be chosen to be 
$f(a)$. Hence $L = [f(a), L] + [y, L]$. Now we apply $f^{-1}$ on both sides to get
$L = [a, L] + [f^{-1}(y), L]$. Since $x = f(a)$ and $y$ are orthogonal, then so are
$a$ and $f^{-1}(y)$ since automorphisms preserve orthogonality \cite[Prop. 1.119]{Knp}.
Finally, since $y$ is regular, then so is $f^{-1}(y)$. \end{proof}

Corollary \ref{corollary-3_3} can be easily generalized to the case of non-Hermitian real semisimple 
Lie algebras of full rank.

\begin{corollary} \label{corollary-3_4} Let $L = \mathfrak{k} \oplus \mathfrak{p}$ be a Cartan decomposition of a 
semisimple Lie algebra $L$ (where $\mathfrak{k}$ is the compact Lie algebra) such that $L$ is non-Hermitian of full rank. 
That is, $\mathfrak{k}$ is semisimple and $\rank(\mathfrak{k}) = \rank(L)$. Let $a$ be a regular 
element of $\mathfrak{k}$. Then there exists a regular element $b$ in $\mathfrak{k}$, such that $L = [a, L] + [b, L]$ 
and $b$ is orthogonal to $a$. Moreover the centralizers $Z(a)$ and $Z(b)$ of $a$ and $b$ in $L$ are 
also orthogonal.\end{corollary}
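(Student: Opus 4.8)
The plan is to deduce this from the compact case (Theorem~\ref{thm-3_1}) applied inside $\mathfrak{k}$, using the full-rank hypothesis to keep everything under control when centralizers are computed in $L$ rather than in $\mathfrak{k}$.

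First I would set up the bookkeeping. In the Cartan decomposition $L=\mathfrak{k}\oplus\mathfrak{p}$, the Killing form $B_L$ is negative definite on $\mathfrak{k}$, positive definite on $\mathfrak{p}$, and $\mathfrak{k}\perp_{B_L}\mathfrak{p}$. For $a\in\mathfrak{k}$, the derivation $\ad a$ preserves both $\mathfrak{k}$ and $\mathfrak{p}$, so $Z_L(a)=Z_{\mathfrak{k}}(a)\oplus Z_{\mathfrak{p}}(a)$. Since $\mathfrak{k}$ is compact, its Cartan subalgebras are its maximal abelian subalgebras, and the hypothesis $\rank\mathfrak{k}=\rank L$ guarantees that each of them is also a (compact) Cartan subalgebra of $L$. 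Finally, $[x,L]^{\perp_{B_L}}=Z_L(x)$ for every $x$, as in Remark~\ref{remark-5_1_2} (valid for any semisimple $L$ since $B_L$ is nondegenerate and invariant).

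Now let $a\in\mathfrak{k}$ be regular in $\mathfrak{k}$, and put $\mathfrak{t}_a=Z_{\mathfrak{k}}(a)$, a Cartan subalgebra of $\mathfrak{k}$ and hence a compact Cartan subalgebra of $L$. Decomposing $\mathfrak{k}=\mathfrak{k}_1\oplus\cdots\oplus\mathfrak{k}_r$ into simple ideals, one has $\mathfrak{t}_a=\bigoplus_i\mathfrak{t}_{a,i}$ with $\mathfrak{t}_{a,i}$ a Cartan subalgebra of $\mathfrak{k}_i$. For each $i$, Theorem~\ref{thm-3_1} applied to $\mathfrak{k}_i$ with $\mathfrak{t}_{a,i}$ prescribed yields a Cartan subalgebra $\mathfrak{t}_{b,i}$ of $\mathfrak{k}_i$ orthogonal to $\mathfrak{t}_{a,i}$ for the Killing form of $\mathfrak{k}_i$, hence also for $B_L$ (which on $\mathfrak{k}_i$ is a positive multiple of that Killing form, while distinct simple ideals are $B_L$-orthogonal). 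Thus $\mathfrak{t}_b:=\bigoplus_i\mathfrak{t}_{b,i}$ is a Cartan subalgebra of $\mathfrak{k}$ with $\mathfrak{t}_a\perp_{B_L}\mathfrak{t}_b$, and by full rank it is a Cartan subalgebra of $L$. I then choose $b\in\mathfrak{t}_b$ to be regular in $L$ --- such elements form a dense open subset of $\mathfrak{t}_b$. Then $\mathfrak{t}_b\subseteq Z_{\mathfrak{k}}(b)\subseteq Z_L(b)=\mathfrak{t}_b$, so $b$ is regular in $\mathfrak{k}$, $Z_L(b)=\mathfrak{t}_b\subseteq\mathfrak{k}$, and $Z_{\mathfrak{p}}(b)=0$.

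The three conclusions now follow. Since $Z_L(a)=\mathfrak{t}_a\oplus Z_{\mathfrak{p}}(a)$ and $Z_L(b)=\mathfrak{t}_b$, and since $B_L(\mathfrak{t}_a,\mathfrak{t}_b)=0$ by construction while $B_L(Z_{\mathfrak{p}}(a),\mathfrak{t}_b)=0$ because $\mathfrak{p}\perp_{B_L}\mathfrak{k}$, the centralizers $Z_L(a)$ and $Z_L(b)$ are $B_L$-orthogonal; in particular $a\perp_{B_L}b$. Moreover $([a,L]+[b,L])^{\perp_{B_L}}=Z_L(a)\cap Z_L(b)\subseteq\mathfrak{t}_a\cap\mathfrak{t}_b$ (as $Z_L(b)=\mathfrak{t}_b\subseteq\mathfrak{k}$), and $\mathfrak{t}_a\cap\mathfrak{t}_b=0$ because $\mathfrak{t}_a\perp_{B_L}\mathfrak{t}_b$ and $B_L$ is definite, hence nondegenerate, on $\mathfrak{k}$; as $B_L$ is nondegenerate on $L$ this forces $L=[a,L]+[b,L]$. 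The step I expect to be the main point is precisely the transition from $\mathfrak{k}$ to $L$: a priori the orthogonality $Z_L(a)\perp Z_L(b)$ might fail through the $\mathfrak{p}$-components of the two centralizers, and the device that rescues it is to take $b$ regular not just in $\mathfrak{k}$ but in all of $L$ --- which is possible exactly because of the full-rank hypothesis --- so that $Z_{\mathfrak{p}}(b)=0$ and the $\mathfrak{p}$-part of the orthogonality degenerates to the free relation $\mathfrak{k}\perp_{B_L}\mathfrak{p}$. The one secondary nuisance is that $B_L|_{\mathfrak{k}}$ need not be proportional to the Killing form of $\mathfrak{k}$ when $\mathfrak{k}$ is not simple, which is why Theorem~\ref{thm-3_1} is invoked ideal by ideal.
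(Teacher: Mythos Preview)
Your argument is correct and follows the same overall strategy as the paper: apply Theorem~\ref{thm-3_1} inside the compact part $\mathfrak{k}$, then use the full-rank hypothesis to promote the resulting orthogonal Cartan subalgebras to Cartan subalgebras of $L$ and read off $L=[a,L]+[b,L]$ via $Z(x)^{\perp}=[x,L]$.

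The packaging differs in two minor ways. First, to pass from orthogonality with respect to the Killing form of $\mathfrak{k}$ to orthogonality with respect to $B_L$, the paper invokes Remark~\ref{remark-3_5} (which shows that $C^{\perp}$ inside a semisimple subalgebra is form-independent), whereas you decompose $\mathfrak{k}$ into simple ideals and use that on each ideal $B_L$ is a scalar multiple of the Killing form; both are valid, and Remark~\ref{remark-3_5} lets one skip the ideal-by-ideal step. Second, the paper first produces $L=[x,L]+[y,L]$ for some regular $x,y$ and then conjugates to replace $x$ by $a$, while you build $\mathfrak{t}_b$ directly orthogonal to $\mathfrak{t}_a=Z_{\mathfrak{k}}(a)$ and finish via the dual identity $([a,L]+[b,L])^{\perp}=Z_L(a)\cap Z_L(b)$. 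Your route has the pleasant side effect of not needing $a$ to be regular in $L$ (only in $\mathfrak{k}$): since you arrange $Z_L(b)=\mathfrak{t}_b\subseteq\mathfrak{k}$, the possible extra piece $Z_{\mathfrak p}(a)$ in $Z_L(a)$ never interferes with the intersection argument.
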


\begin{proof} The proof is very similar to the proof of Corollary \ref{corollary-3_3} with the evident modifications  
as follows. By Theorem \ref{thm-3_1}, there exist orthogonal Cartan subalgebras $C$ and $D$ in $\mathfrak{k}$.
In fact, $C$ and $D$ are also orthogonal with respect to the Killing form on $L$ (see Remark \ref{remark-3_5} below). 
Since $\rank(\mathfrak{k}) = \rank(L)$, $C$ and $D$ are also Cartan subalgebras of $L$.
Since the restriction of the Killing form of $L$ on every Cartan subalgebra is non-degenerate, we have
$L = D + D^{\perp}$. But $D$ is a subset of $C^{\perp}$ in $L$.
Hence $L = C^{\perp} + D^{\perp}$. Let $x$ be any regular element of $L$ in $C$ and let $y$ be any regular element of $L$ in $D$.
Then the rest of the proof is verbatim as in the proof of Corollary \ref{corollary-3_3}, leading to  
$L = [x, L] + [y, L]$ and finally, $L = [a, L] + [f^{-1}(y), L]$ where $a$ and $f^{-1}(y)$ are regular and 
orthogonal in $L$. \end{proof}

Apart from the Lie algebras of compact semisimple Lie groups, some other examples of Lie algebras 
satisfying the hypotheses of Corollary \ref{corollary-3_4} are $\mathfrak{so}(2j,2k)$ with $j$ and $k$ both $>1$, 
$\mathfrak{so}(2j, 2k+1)$ if $j>1$ and $\mathfrak{sp}(p,q)$ where $p$ and $q$ are both positive.

\vspace*{1\baselineskip}

\begin{remark} \label{remark-3_5} Over a field of characteristic $0$, let $S$ be a semisimple
Lie subalgebra of a semisimple Lie algebra $L$, and let $C$ be a Cartan
subalgebra of $S$. Then the orthogonal space of $C$ in $S$ (with respect to the 
Killing form of $S$) is contained in the orthogonal space of $C$ in $L$
(with respect to the Killing form of $L$).
\end{remark}

\begin{proof} We know that $C = Z(a)$ for some regular element $a$ of $S$.
Moreover, the restriction of the Killing form of $L$ to $S$ is also non-degenerate
by a simple application of Cartan's solvability criterion by \cite[chapter 1, section 6.1, Proposition 1]{Bor1}.   
Hence, by \cite[Lemma 2.2]{Akh} (stated after Remark \ref{remark-5_1_2} below), the orthogonal space of $C = Z(a)$ in 
$S$ coincides with $[a, S]$ independently of the invariant non-degenerate symmetric bilinear form on $S$.
\end{proof}

\section{Applications} \label{section4}
\subsection{Openness of commutator maps} \label{section4_1}

We start with the following immediate application of Theorem \ref{thm-3_2}.

\begin{lemma} \label{lemma-4_1_1} Let $L = \Lie(G)$ be the Lie algebra of a compact semisimple Lie group, and let 
$C = \Lie(T)$ be a maximal toral subalgebra of $L$. Let $V$ be a $G$-stable neighborhood of $0 \in L$. Then
\[ G.(V \cap C^{\perp}) = V \]
\end{lemma}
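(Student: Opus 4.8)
The plan is to prove the two inclusions $G.(V\cap C^{\perp})\subseteq V$ and $V\subseteq G.(V\cap C^{\perp})$ separately. The first is immediate: $V$ is $G$-stable, and $V\cap C^{\perp}\subseteq V$, so applying elements of $G$ keeps us inside $V$. Thus the only content is the reverse inclusion $V\subseteq G.(V\cap C^{\perp})$, i.e. every point of $V$ can be moved into $C^{\perp}$ by the adjoint action while staying inside $V$.

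For the reverse inclusion, let $x\in V$. By Theorem \ref{thm-3_2}, there exists $g\in G$ with $g.x\in C^{\perp}$. The issue is that $g.x$ need not obviously lie in $V$; but here is where $G$-stability of $V$ saves us. Since $x\in V$ and $V$ is $G$-stable, the whole orbit $G.x$ is contained in $V$; in particular $g.x\in V$. Hence $g.x\in V\cap C^{\perp}$, and then $x=g^{-1}.(g.x)\in G.(V\cap C^{\perp})$. So $V\subseteq G.(V\cap C^{\perp})$, and combined with the trivial inclusion we get equality.

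I do not anticipate a genuine obstacle here — the statement is essentially a bookkeeping repackaging of Theorem \ref{thm-3_2} together with the hypothesis that $V$ is $G$-stable. The one point worth stating carefully is precisely the use of $G$-stability to conclude that the single element $g.x$ produced by Theorem \ref{thm-3_2} lies in $V$; without that hypothesis the lemma would be false. It may also be worth remarking, for the reader's orientation, that the role of this lemma is to feed the openness statement of Corollary \ref{cor-1_5}: if $V$ is open and $G$-stable then $G.(V\cap C^{\perp})=V$ is open, which (together with the commutator map hitting $C^{\perp}$) is exactly what is needed to deduce openness of the commutator map at $(0,0)$. No new machinery beyond Theorem \ref{thm-3_2} is required.
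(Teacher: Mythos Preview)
Your proof is correct and essentially identical to the paper's own argument: both obtain $G.(V\cap C^{\perp})\subseteq V$ from $G$-stability, and for the reverse inclusion both apply Theorem~\ref{thm-3_2} to produce $g$ with $g.x\in C^{\perp}$, then use $G$-stability again to ensure $g.x\in V$. Your additional remarks about the role of the lemma in feeding Corollary~\ref{cor-1_5} are accurate commentary but not part of the proof proper.
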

\begin{proof} This follows immediately from Theorem \ref{thm-3_2} as follows. Since $V$ is $G$-stable, 
$G.(V \cap C^{\perp}) \subseteq V$. Conversely, if $v \in V$, then by Theorem \ref{thm-3_2}, there exists a 
$g \in G$ such that $g.v \in C^{\perp}$. Hence $g.v \in V \cap C^{\perp}$; so $v \in G.(V \cap C^{\perp})$.
\end{proof}
Using lemma \ref{lemma-4_1_1}, one can prove the following theorem.
\begin{theorem} \label{thm-4_1_2} (\cite{A-M}, theorem $2.1$). Let $L = \Lie(G)$ be the Lie algebra of a compact 
semisimple Lie group. Then the commutator map $\Comm: L \times L \to L$ given by $\Comm(x,y) = [x,y]$ is open 
at $(0,0)$.
\end{theorem}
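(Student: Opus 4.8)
The plan is to deduce openness at $(0,0)$ from Lemma \ref{lemma-4_1_1} by exhibiting, for every neighborhood $U$ of $(0,0)$ in $L \times L$, a neighborhood $W$ of $0$ in $L$ contained in the image $\Comm(U)$. The natural strategy is to first reduce to a product of balls: fix a $G$-invariant inner product on $L$ (the negative of the Killing form) and note that it suffices to show that for each $\varepsilon > 0$ there is $\delta > 0$ with $B_\delta(0) \subseteq \Comm(B_\varepsilon(0) \times B_\varepsilon(0))$. Since the commutator bracket is bilinear and hence homogeneous of degree $2$ (i.e. $\Comm(tx, ty) = t^2 \Comm(x,y)$), a single rescaling argument will then upgrade one such inclusion to all of them, so really I only need to produce one nonempty ball around $0$ inside the image of one product of balls.

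Next I would bring in the structural input. By Theorem \ref{thm-2_4}, there is a regular element $a \in L$ with $C \subseteq [a, L]$ where $C = \Lie(T)$; more useful here is the companion fact that underlies Lemma \ref{lemma-4_1_1}: taking $V = B_\varepsilon(0)$, which is $G$-stable because the inner product is $G$-invariant, we get $G.(B_\varepsilon(0) \cap C^\perp) = B_\varepsilon(0)$. So every element of a fixed ball around $0$ is $G$-conjugate to an element of $C^\perp$ of the same (or smaller) norm. The remaining task is to realize elements of $C^\perp$ near $0$ as commutators of small elements, and then transport this across the conjugation: if $z \in C^\perp$ is small and $z = [u, v]$ with $u, v$ small, then for any $g \in G$ we have $g.z = [g.u, g.v]$ with $g.u, g.v$ of the same norms, so the whole orbit stays in the image of a product of balls.

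To produce the commutator representation on $C^\perp$ itself, I would use that $C^\perp = [a, L]$ for a regular element $a \in C$ (Remark \ref{remark-5_1_2}), or more directly work near a regular semisimple element: choose a regular $\xi \in C$, so that $\ad(\xi): C^\perp \to C^\perp$ is a linear isomorphism; then the map $v \mapsto [\xi + 0, v] = [\xi, v]$ already surjects a neighborhood of $0$ in $C^\perp$ from a neighborhood of $0$ in $C^\perp$, but I need the first slot small too. Here I would instead scale: for small $t$, $[\,t\xi, v\,] = t[\xi, v]$, and as $v$ ranges over a fixed small ball in $C^\perp$ this sweeps out $t \cdot (\text{ball in } C^\perp)$, a ball of radius proportional to $t$; choosing $t$ small makes the first argument $t\xi$ small as well, at the cost of shrinking the target ball — which is fine, since by the homogeneity reduction one small target ball suffices. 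Combining: a ball $B_\delta(0) \cap C^\perp$ lies in $\Comm\big( B_\varepsilon(0) \times B_\varepsilon(0) \big)$, hence by the conjugation transport and Lemma \ref{lemma-4_1_1}, $B_\delta(0) = G.(B_\delta(0) \cap C^\perp) \subseteq \Comm\big( B_\varepsilon(0) \times B_\varepsilon(0) \big)$, which is exactly openness at $(0,0)$.

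The main obstacle I anticipate is bookkeeping the norm estimates cleanly: one must check that $G$-invariance of the chosen form really does make the relevant balls $G$-stable (so that Lemma \ref{lemma-4_1_1} applies verbatim) and that conjugating a commutator $[u,v]$ by $g$ does not enlarge the norms of the two arguments — both are immediate from $\|g.x\| = \|x\|$, but they are the crux of why the argument works. A secondary subtlety is ensuring the regular element $\xi \in C$ used to invert $\ad(\xi)$ on $C^\perp$ can be taken with controlled norm; since $C$ contains regular elements of arbitrarily small norm (scale any regular element down), this is harmless. No compactness or degree-theoretic input beyond Lemma \ref{lemma-4_1_1} should be needed; the proof is essentially a packaging of that lemma together with bilinearity of the bracket and invertibility of $\ad(\xi)|_{C^\perp}$.
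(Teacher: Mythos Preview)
Your proposal is correct and follows essentially the approach the paper indicates: the paper does not write out a self-contained proof but refers to \cite{A-M} and observes that the only structural input needed is Lemma~\ref{lemma-4_1_1}, which is exactly the ingredient you isolate. Your reconstruction---take a regular $\xi\in C$ so that $\ad(\xi)|_{C^{\perp}}$ is invertible, scale $\xi$ to make the first argument small, then use $G$-equivariance of the bracket together with $G.(B_\delta\cap C^{\perp})=B_\delta$ from Lemma~\ref{lemma-4_1_1} to transport this over the whole ball, and finally invoke degree-$2$ homogeneity to pass from one $\varepsilon$ to all---is the natural way to fill in the details the paper leaves to \cite{A-M}, and all the bookkeeping concerns you flag (the $G$-invariance of the balls, the norm preservation under conjugation, the availability of small regular elements) are handled by the $\Ad$-invariance of the negative Killing form.
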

The proof of this theorem as given in \cite{A-M} relies on their lemma $2.2$, that we proved in Theorem \ref{thm-3_1} 
above (without the classification tables of the fundamental weights versus the highest roots). 

However, a careful study of their proof of Theorem \ref{thm-4_1_2} only requires the partial result 
in our lemma \ref{lemma-4_1_1}. 

\begin{remark} \label{rmk-4_1_3} It is worth noting that the authors in \cite{A-M} were able to use the fact that 
the commutator map $\Comm: L \times L \to L$ given by $\Comm(x,y) = [x,y]$ is open at $(0,0)$ to prove their main 
result that $\Comm_G: G \times G \to G$ given by $\Comm_G(g_1,g_2) = [g_1,g_2] = g_1g_2g_1^{-1}g_2^{-1}$ is open 
at $(\Id,\Id)$ in $G \times G$ (\cite{A-M}, Corollary 3.1).
\end{remark}

\subsection{Fatness of certain connections on principal fiber bundles} \label{section4_2}

We first review the basic definition of fatness, in the context of principal fiber bundles with connection. Such 
material can easily be found in the literature (see the original paper \cite{Wei} by A. Weinstein, or later works such 
as \cite{F-Z} or \cite{Ler}). Let $G \hookrightarrow P \stackrel{\pi}{\rightarrow} B$ be a principal $G$-bundle, with base 
manifold $B$, and total space $P$. Let $\theta: TP \to \mathfrak{g}$ be a connection $1$-form on $P$, where $TP$ denotes 
the tangent bundle of $P$. We shall denote the curvature of $\theta$ by $\Omega(-,-): \mathcal{H} \times \mathcal{H} \to \mathfrak{g}$, 
where $TP = \mathcal{H} \oplus \mathcal{V}$ is the decomposition into horizontal and vertical subbundles induced by $\theta$. 
Fix a bi-invariant (positive-definite) inner product $B(-,-): \mathfrak{g} \times \mathfrak{g} \to \mathbb{R}$ on $\mathfrak{g}$.
 
\begin{definition} \label{def-4_2_1} In the setting above of a principal $G$-bundle $P$ with connection $\theta$, 
an element $u \in \mathfrak{g}$ is said to be fat if 
$\Omega_u: \mathcal{H} \times \mathcal{H} \to \mathbb{R}$ is non-degenerate, where $\Omega_u$ is defined by
\[ \Omega_u(Y_1, Y_2) = B(\Omega(Y_1,Y_2), u) \]  
We also say that a subspace $V \subseteq \mathfrak{g}$ is fat if every non-zero element of $V$ is fat. The 
connection $\theta$ is said to be fat if $\mathfrak{g}$ is fat. 
\end{definition}

We can easily deduce the following theorem, answering a question raised in \cite[Remark 3.13]{F-Z}
\begin{theorem} \label{thm-4_2_2} If $G$ is a compact semisimple Lie group, $B(-,-)$ is minus its Killing form, and 
$G \hookrightarrow P \stackrel{\pi}{\to} B$ is a principal $G$-bundle on $B$ with total space $P$, endowed with a connection $1$-form 
$\theta: TP \to \mathfrak{g}$, then $\theta$ is fat if and only if $C^{\perp}$ is fat, where $C$ can be chosen to be 
any Cartan subalgebra of $\mathfrak{g}$.
\end{theorem}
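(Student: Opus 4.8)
The plan is to reduce the biconditional to a statement about $G$-orbits. The key observation is that fatness is a $G$-equivariant condition: since $B(-,-)$ is $\Ad$-invariant, and the curvature $\Omega$ transforms equivariantly under the $G$-action on $P$, one checks that $u \in \mathfrak{g}$ is fat if and only if $\Ad(g)u$ is fat for every $g \in G$. Concretely, if $Y_1, Y_2$ are horizontal vectors at a point $p \in P$, then at $p \cdot g$ the horizontal space is carried over by the differential of right translation, and $\Omega_{p\cdot g}(R_{g*}Y_1, R_{g*}Y_2) = \Ad(g^{-1})\Omega_p(Y_1,Y_2)$; pairing with $u$ via $B$ and using invariance shows $\Omega_u$ is non-degenerate on $\mathcal{H}_p$ precisely when $\Omega_{\Ad(g)u}$ is non-degenerate on $\mathcal{H}_{p\cdot g}$. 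Hence the fat locus in $\mathfrak{g}$ is a union of $\Ad(G)$-orbits (equivalently, $G$-orbits in $L = \mathfrak{g}$).

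Granting this equivariance, the theorem is immediate from Theorem \ref{thm-3_2} (equivalently Corollary \ref{cor-1_3}). First I would prove the ``only if'' direction, which is trivial: if $\theta$ is fat then by definition all of $\mathfrak{g}$ is fat, so in particular every non-zero element of the subspace $C^{\perp}$ is fat, i.e. $C^{\perp}$ is fat. For the ``if'' direction, suppose $C^{\perp}$ is fat for some Cartan subalgebra $C$. Let $u \in \mathfrak{g} \setminus \{0\}$ be arbitrary. By Theorem \ref{thm-3_2}, the $G$-orbit of $C^{\perp}$ is all of $L$, so there exists $g \in G$ with $g \cdot u \in C^{\perp}$; moreover $g\cdot u \neq 0$ since the $G$-action is linear and invertible. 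Since $C^{\perp}$ is fat, $g \cdot u = \Ad(g)u$ is fat, and therefore by the equivariance of fatness, $u$ itself is fat. As $u$ was arbitrary, $\mathfrak{g}$ is fat, i.e. $\theta$ is fat.

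The main obstacle, and the only place requiring genuine care, is establishing the $\Ad(G)$-invariance of the fat locus, i.e. verifying the curvature transformation law $R_g^*\Omega = \Ad(g^{-1})\circ\Omega$ and tracking how it interacts with the pairing $\Omega_u(Y_1,Y_2) = B(\Omega(Y_1,Y_2),u)$. This is standard in principal bundle theory (it follows from the equivariance $R_g^*\theta = \Ad(g^{-1})\circ\theta$ of the connection form together with the structure equation), so I would state it with a reference to \cite{F-Z} or \cite{Wei} rather than prove it from scratch. Everything else is a formal consequence of Theorem \ref{thm-3_2} and the definition of fatness, so the proof can be kept quite short.
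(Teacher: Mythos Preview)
Your proposal is correct and matches the paper's own proof essentially line for line: the paper also dispatches the ``only if'' direction as trivial and proves the ``if'' direction by combining Corollary~\ref{cor-1_3} (equivalently Theorem~\ref{thm-3_2}) with the $\Ad(G)$-invariance of fatness, the latter quoted from \cite{F-Z} rather than reproved. Your additional sketch of why fatness is $G$-equivariant (via the curvature transformation law) is a nice elaboration, but the overall strategy is identical.
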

\begin{proof} The ``only if'' part is trivial. The other direction follows from the fact that $G.C^{\perp} = \mathfrak{g}$ by 
\ref{cor-1_3} (which follows immediately from Kostant's convexity theorem), and the known fact that if $u \in \mathfrak{g}$ is 
fat, then $g.u$ is also fat, for any $g \in G$ (see for example \cite{F-Z}).
\end{proof} 

\section{\texorpdfstring{Commutators in semisimple Lie algebras 
and survey of some related results}{Conjectures and survey}} \label{section5}

\subsection{\texorpdfstring{Conjectures about commutators in semisimple Lie algebras}
{Some conjectures about commutators}} \label{section5_1}

\begin{remark} \label{remark-5_1_1} The following two conjectures are equivalent.
\begin{enumerate}[leftmargin=*]
\item[1)] Every element $x$ in a real semisimple Lie algebra $L$ is the commutator of two elements where one 
element can be chosen to be regular (hence semisimple).
\item[2)] Every element $x$ in a real semisimple Lie algebra $L$ is orthogonal to some Cartan subalgebra of $L$.
\end{enumerate}
\end{remark}

\begin{proof} Note that every regular element of a semisimple Lie algebra (over a field of characteristic $0$) is 
semisimple \cite[Chapter 7, section 2, Corollary 2]{Bor2}. First we prove conjecture 1) implies conjecture 2). 
Suppose  $x = [a, b]$ where $a$ is regular (semisimple). Then $x$ belongs to $[a, L]$.
But $[a, L]= Z(a)^{\perp}$ by Remark \ref{remark-5_1_2}. Hence $x$ belongs to $Z(a)^{\perp}$.
Now $a$ is regular (semisimple), $Z(a)$ is a Cartan subalgebra $C$ of $L$.
Consequently, $x$ belongs to $C^{\perp}$.
Conversely, if $x$ belongs to $C^{\perp}$ where $C$ is a Cartan subalgebra of $L$.
Then $C =Z(a)$ for some regular element $a$ of $L$.
So $x$ belongs to $Z(a)^{\perp}$ which coincides with $[a, L]$ by Remark \ref{remark-5_1_2}.
Hence $x$ belongs to $[a, L]$. 
\end{proof}

\begin{remark} \label{remark-5_1_2} Let $a$ be a semisimple element of a semisimple Lie algebra $L$ over a field of 
characteristic $0$.  Then $Z(a)^{\perp} =  [a, L]$ with respect to the negative of the Killing form on $L$ (where $Z(a)$ is 
the centralizer of $a$ in $L$.
\end{remark}
Remark \ref{remark-5_1_2} is part of Theorem 4.1.6 in \cite{Var} which can be checked directly. Roughly, $Z(a)$ is 
orthogonal to $[a, L]$, and both $Z(a)^{\perp}$ and $[a, L]$ have the same dimension. However, it is worth noting that 
Remark \ref{remark-5_1_2} (with essentiall the same proof) is also valid for any element of $L$ and, for any invariant 
non-degenerate symmetric bilinear form on $L$ as shown in \cite[Lemma 2.1]{Akh}.
 
\subsection{Survey of some recent results on the commutator conjecture} \label{section5_2}

In the recent paper \cite{Akh}, the commutator conjecture was proved in many cases.

\begin{theorem} \cite[Thms 1.1--1.4]{Akh} \label{thm-5_2_1} Let $L = \Lie(G)$ be a simple real Lie algebra. Let 
$L = \mathfrak{k} \oplus \mathfrak{p}$ be a Cartan decomposition of $L$ where $\mathfrak{k}$ is the 
reductive part. Let $\mathfrak{a}$ be a Cartan subspace of $\mathfrak{p}$, and let $\mathfrak{m}$ 
be the centralizer of $\mathfrak{a}$ in $\mathfrak{k}$. Then every element $x$ of $L$ can be written as 
$x = [a,b]$ where $a$ can be chosen to be a regular (semisimple) element of $L$, in each of the following 
cases:
\begin{enumerate}
\item $L$ is compact or split (i.e. $\mathfrak{m} = 0$)
\item $L$ is non-hermitian (i.e. $\mathfrak{k}$ is semisimple) and $\rank(\mathfrak{k}) = \rank(L)$
\item $\mathfrak{m}$ is a semisimple Lie algebra
\end{enumerate}
\end{theorem}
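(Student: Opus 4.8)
The plan is to reduce the statement, via Remark \ref{remark-5_1_1}, to the following geometric assertion: \emph{every $x \in L$ is orthogonal, with respect to $-B$ (where $B$ is the Killing form), to some Cartan subalgebra of $L$}. Indeed, if $x \perp C$ with $C = Z(a)$ and $a$ regular, then $x \in C^{\perp} = [a,L]$ by Remark \ref{remark-5_1_2}, so $x = [a,b]$ with $a$ regular; the converse is the content of Remark \ref{remark-5_1_1}. Throughout, write $L = \mathfrak{k} \oplus \mathfrak{p}$, let $\mathfrak{a} \subseteq \mathfrak{p}$ be the Cartan subspace, $\mathfrak{m} = Z_{\mathfrak{k}}(\mathfrak{a})$, $\Sigma$ the restricted root system with restricted Weyl group $W(\Sigma) = N_K(\mathfrak{a})/Z_K(\mathfrak{a})$, where $K$ is the analytic subgroup with Lie algebra $\mathfrak{k}$. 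Since $\mathfrak{k}$ is compact and semisimple, $K$ is compact and connected, so $\exp \colon \mathfrak{k} \to K$ is surjective; this is what replaces the surjectivity of the exponential map used in Section \ref{section2}, which is no longer available on the (non-compact) group itself. Recall also that the Cartan decomposition is $B$-orthogonal.

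Cases (1)-compact and (2) can be treated together, since both amount to ``$\mathfrak{k}$ is semisimple and $\rank(\mathfrak{k}) = \rank(L)$'' (automatic when $\mathfrak{p} = 0$). Given $x \in L$, write $x = x_{\mathfrak{k}} + x_{\mathfrak{p}}$. By Theorem \ref{thm-3_2} applied to the compact semisimple Lie algebra $\mathfrak{k}$, there is a Cartan subalgebra $C$ of $\mathfrak{k}$ with $x_{\mathfrak{k}} \in C^{\perp}$ inside $\mathfrak{k}$; by Remark \ref{remark-3_5} this vector lies in $C^{\perp}$ inside $L$ as well, and $x_{\mathfrak{p}} \perp \mathfrak{k} \supseteq C$, so $x \perp C$ in $L$. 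Since $\rank(\mathfrak{k}) = \rank(L)$, the subalgebra $C$ is a Cartan subalgebra of $L$ (as in the proof of Corollary \ref{corollary-3_4}); choosing $a \in C$ regular in $L$ gives $x \in C^{\perp} = Z(a)^{\perp} = [a,L]$ by Remark \ref{remark-5_1_2}, i.e. $x = [a,b]$ with $a$ regular in $L$.

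For the remaining cases --- (1)-split and (3), i.e. $\mathfrak{m}$ semisimple (possibly zero) --- the idea is to run the Coxeter argument of Section \ref{section2} on the \emph{restricted} root system. Let $c = s_{1} \cdots s_{r}$ be a Coxeter element of $W(\Sigma)$; by Lemma \ref{lemma-2_1} it has no nonzero fixed vector on $\mathfrak{a}$. Lift $c$ to $n \in N_K(\mathfrak{a})$; since $K$ is compact and connected, $n = \exp(N)$ for some $N \in \mathfrak{k}$, and $\Ad(n)|_{\mathfrak{a}} - \Id$ is an isomorphism of $\mathfrak{a}$. Hence for each $x \in \mathfrak{a}$ there is $t \in \mathfrak{a}$ with $x = \Ad(n)t - t = \exp(\ad N)t - t$, which factors as $[N, y(x)]$ exactly as in the proof of Theorem \ref{thm-2_3}, so $\mathfrak{a} \subseteq [N,L]$. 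Since $N \in \mathfrak{k}$, $\ad N$ is skew-symmetric for the inner product $-B(\cdot, \theta\cdot)$, hence semisimple, so $N$ lies in a Cartan subalgebra $C'$ of $L$; arguing as in Theorem \ref{thm-2_4} (from $C' \subseteq Z(N)$ and Remark \ref{remark-5_1_2}) one gets $[N,L] \subseteq [a',L] = (C')^{\perp}$ for a regular $a'$ with $Z(a') = C'$, so $\mathfrak{a} \perp C'$. When $\mathfrak{m} = 0$ this is the split case, $\mathfrak{a}$ is itself a Cartan subalgebra of $L$, and we have produced two orthogonal Cartan subalgebras; when $\mathfrak{m}$ is semisimple, one enlarges $\mathfrak{a}$ to the maximally split Cartan subalgebra $C = \mathfrak{t} \oplus \mathfrak{a}$, with $\mathfrak{t}$ a Cartan subalgebra of $\mathfrak{m}$, for which $C^{\perp} = \mathfrak{t}^{\perp_{\mathfrak{m}}} \oplus \bigoplus_{\lambda \in \Sigma} L_{\lambda}$, and uses Theorem \ref{thm-3_2} for the compact semisimple $\mathfrak{m}$ to control the $\mathfrak{m}$-direction.

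I expect the main obstacle to be the passage from ``there is a Cartan subalgebra orthogonal to the maximally split Cartan $C$'' to ``every $x \in L$ is orthogonal to some Cartan subalgebra'': unlike in the compact case (Theorem \ref{thm-3_2}), the Cartan subalgebras of $L$ are no longer all conjugate, so one cannot simply transport $x$ into a fixed one. What is needed is that every adjoint orbit meets $C^{\perp}$, and the natural route is the real Jordan decomposition $x = x_{h} + x_{e} + x_{n}$ (hyperbolic, elliptic, nilpotent, pairwise commuting): conjugate $x_{h}$ into the closed Weyl chamber of $\mathfrak{a}$, pass to the reductive centralizer $Z(x_{h})$, and handle the elliptic and nilpotent parts inside it, the elliptic part by applying the compact-case result to a maximal compact subalgebra of $Z(x_{h})$. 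The hypotheses $\mathfrak{m} = 0$, respectively $\mathfrak{m}$ semisimple, are exactly what keep the relevant pieces of $Z(x_{h})$ within reach of the compact case; making this bookkeeping precise --- and, throughout, ensuring that the element $a$ one produces is regular \emph{in $L$} and not merely in a subalgebra --- is the real work.
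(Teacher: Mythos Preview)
First, note that the paper does not give its own proof of Theorem~\ref{thm-5_2_1}: it is a survey citation to Akhiezer \cite{Akh}, with the only remark being that Akhiezer's arguments ``rely on the Kostant convexity theorem and some interesting arguments by the author.'' So there is no in-paper proof to compare your proposal against; your Coxeter-based route is in any case different in flavor from Akhiezer's convexity approach.

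Your treatment of the compact case and of case~(ii) is correct and is essentially the mechanism behind Corollary~\ref{corollary-3_4}: push $x_{\mathfrak{k}}$ into $C^{\perp}$ for some Cartan $C \subseteq \mathfrak{k}$ via Theorem~\ref{thm-3_2}, transfer orthogonality to $L$ by Remark~\ref{remark-3_5}, and use the full-rank hypothesis so that $C$ is a Cartan subalgebra of $L$. One correction: your blanket assertion that ``$\mathfrak{k}$ is compact and semisimple'' fails in the split case and in case~(iii) in general (e.g.\ $\mathfrak{sl}(2,\mathbb{R})$ has $\mathfrak{k}=\mathfrak{so}(2)$, which is abelian). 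The surjectivity of $\exp$ on $K$ still holds, since $K$ is compact connected once $G$ is taken with finite center, so this does not break the Coxeter lifting step; but the semisimplicity claim should be dropped from the general setup.

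The genuine gap is precisely the one you flag yourself. For the split case and case~(iii) your Coxeter argument on the restricted root system yields only a Cartan subalgebra $C'$ orthogonal to the maximally split Cartan $C=\mathfrak{t}\oplus\mathfrak{a}$. Since the Cartan subalgebras of a non-compact real semisimple $L$ fall into several $G$-conjugacy classes, the statement ``$C$ and $C'$ are orthogonal'' does \emph{not} imply ``every $G$-orbit meets $C^{\perp}$,'' which is what you actually need (contrast Theorem~\ref{thm-3_2}, where conjugacy of Cartans is what makes the implication work). Your Jordan-decomposition outline --- conjugate $x_h$ into $\overline{\mathfrak{a}^+}$, pass to $Z(x_h)$, handle the elliptic and nilpotent parts there --- is plausible in shape, but it is not a proof: controlling the pieces inside the centralizer, reassembling a Cartan subalgebra of $L$ (not merely of $Z(x_h)$) orthogonal to $x$, and ensuring the resulting element $a$ is regular in $L$ rather than only in a subalgebra, is exactly the substantive content of \cite{Akh}. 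As the proposal stands, cases~(i)-split and~(iii) remain unproved.
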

The paper \cite{Akh} relies on the Kostant convexity theorem and some interesting arguments by the author.

For more information about complex or split semisimple Lie algebras, see \cite[Thm. 4.3]{A-M}  and \cite[Lemma 2]{Brn}.

\subsection{Survey on generators and $1.5$ generators of simple Lie algebras} \label{section5_3}

\begin{theorem} \label{thm-5_3_1} \cite[Thm. 26]{B-N} Let $L$ be a semisimple Lie algebra over a field of characteristic $0$ (or characteristic 
$p > 3$). Then there exist $x$, $y$ in $L$ such that $L = [x,L] + [y,L]$.
\end{theorem}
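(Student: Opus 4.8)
The plan is to recast the conclusion $L=[x,L]+[y,L]$ as a transversality statement about centralizers and then to exhibit suitable $x$ and $y$ explicitly from a root space decomposition. Write $\kappa$ for the Killing form of $L$, which is nondegenerate in characteristic $0$ since $L$ is semisimple; in characteristic $p>3$ one uses instead an invariant nondegenerate symmetric bilinear form adapted to the class of algebras at hand (the Killing form may itself degenerate there). By Remark \ref{remark-5_1_2}, extended to arbitrary elements in \cite[Lemma 2.1]{Akh} --- the content being that $[a,L]\subseteq Z(a)^{\perp}$ by invariance of the form while the two spaces have the same dimension, an argument valid verbatim for any nondegenerate invariant form --- we have $[a,L]=Z(a)^{\perp}$ for every $a\in L$. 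Combining this with the elementary identity $U^{\perp}+V^{\perp}=(U\cap V)^{\perp}$ for subspaces of a finite-dimensional space carrying a nondegenerate form, we obtain for all $x,y\in L$
\[
[x,L]+[y,L]=Z(x)^{\perp}+Z(y)^{\perp}=\bigl(Z(x)\cap Z(y)\bigr)^{\perp},
\]
so that $L=[x,L]+[y,L]$ \emph{if and only if} $Z(x)\cap Z(y)=0$. The theorem is thereby reduced to producing two elements whose centralizers meet only in $0$.

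Next I would build such a pair directly. Fix a Cartan subalgebra $\mathfrak{h}$ of $L$, with root system $\Sigma\subseteq\mathfrak{h}^{*}$, simple roots $\alpha_{1},\dots,\alpha_{n}$, and root space decomposition $L=\mathfrak{h}\oplus\bigoplus_{\alpha\in\Sigma}L_{\alpha}$; pick a nonzero $e_{\alpha}\in L_{\alpha}$ for each $\alpha$. Take $x\in\mathfrak{h}$ regular --- one on which no root vanishes --- so that $Z(x)=\mathfrak{h}$ (one always has $\mathfrak{h}\subseteq Z(x)$, and regularity forces equality); such an $x$ exists over any infinite field, and with a little more care over finite fields of characteristic $>3$ as well, a routine point I suppress. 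Put $y=\sum_{i=1}^{n}e_{\alpha_{i}}$. If $h\in Z(x)\cap Z(y)=\mathfrak{h}\cap Z(y)$, then $0=[h,y]=\sum_{i=1}^{n}\alpha_{i}(h)\,e_{\alpha_{i}}$; the $e_{\alpha_{i}}$ lie in distinct summands, hence are linearly independent, so $\alpha_{i}(h)=0$ for all $i$, and since the simple roots span $\mathfrak{h}^{*}$ this forces $h=0$. Thus $Z(x)\cap Z(y)=0$, and the reduction of the first paragraph gives $L=[x,L]+[y,L]$.

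The routine ingredients --- that $Z(x)=\mathfrak{h}$ for regular $x$, the duality $U^{\perp}+V^{\perp}=(U\cap V)^{\perp}$, and the linear independence of distinct root vectors --- I would not belabor. The genuine obstacle is the characteristic $p>3$ case, where one must verify that the structure theory invoked above is actually in force: a toral Cartan subalgebra with an honest root space decomposition, the roots spanning $\mathfrak{h}^{*}$, a nondegenerate invariant symmetric bilinear form in the role of $\kappa$, and a regular element over the given field; the hypothesis $p>3$ is precisely what rules out the small-characteristic pathologies that would break one of these. A cleaner but less self-contained route is to invoke the classical theorem --- due to Kuranishi in characteristic $0$, with analogues in characteristic $p>3$ --- that a semisimple Lie algebra is generated as a Lie algebra by two elements $x,y$: then $Z(x)\cap Z(y)$ is the centralizer of all of $L$, hence equals $Z(L)=0$, and one concludes exactly as above; this also matches the introduction's remark that $x,y$ generating $L$ already forces $L=[L,x]+[L,y]$ by \cite[Lemma 25c]{Bor1}.
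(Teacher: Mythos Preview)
The paper does not itself prove this theorem: it is stated in the survey subsection~\ref{section5_3} as a citation from \cite{B-N}, with no argument given. The only hint the paper offers is the remark immediately following, that if $x,y$ generate $L$ as a Lie algebra then $L=[L,x]+[L,y]$; combined with the classical two-generation theorem for semisimple Lie algebras (Kuranishi in characteristic $0$, with known extensions to characteristic $p>3$), this is the route the paper points to, and it is exactly the ``cleaner but less self-contained'' alternative you sketch at the end of your proposal.

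Your primary argument takes a different and more explicit path: you reduce $L=[x,L]+[y,L]$ to $Z(x)\cap Z(y)=0$ via the duality $[a,L]=Z(a)^{\perp}$, and then exhibit the pair directly as a regular $x\in\mathfrak{h}$ together with $y=\sum_{i}e_{\alpha_i}$. In characteristic $0$ this is correct and has the virtue of producing concrete witnesses rather than invoking two-generation as a black box (and in fact your pair $(x,y)$ \emph{does} generate $L$, so the two approaches are closer than they first appear). One small gap you do not flag: writing down root vectors $e_{\alpha_i}\in L$ presupposes that $\mathfrak{h}$ is \emph{split}, which need not hold over an arbitrary field of characteristic $0$ (e.g.\ $\mathfrak{su}(2)$ over $\mathbb{R}$). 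This is easily repaired --- the condition that $(u,v)\mapsto[x,u]+[y,v]$ be surjective is Zariski-open in $(x,y)$, so a solution over $\bar{k}$ yields one over the infinite field $k$ --- but it should be said. The characteristic $p>3$ issues you already acknowledge; those are precisely the technicalities absorbed by the reference to \cite{B-N}.
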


\begin{theorem} \label{thm-5_3_2} \cite[Thm. 2]{N} (on $1.5$ generators). Let $L$ be a semisimple Lie algebra over 
any field of characteristic $0$. For any fixed $x \in L \setminus \{0\}$, there exists a regular element $y \in L$ 
such that
\[ L = [x,L] + [y,L] \]
\end{theorem}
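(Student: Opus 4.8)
The plan is to restate the assertion as a transversality property of Cartan subalgebras, reduce — by a genericity argument — to producing a single transverse Cartan subalgebra over the algebraic closure, and then obtain the latter from the observation that an adjoint orbit which spans $L$ cannot be covered by finitely many proper Levi subalgebras. Write $Z(z)$ for the centraliser of $z$ in $L$.

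First I would use the identity $[z,L]^{\perp}=Z(z)$, valid for \emph{any} $z\in L$ with respect to the Killing form — it is immediate from the invariance and non-degeneracy of the Killing form, being Remark~\ref{remark-5_1_2} extended to arbitrary elements exactly as in \cite[Lemma 2.1]{Akh}. Passing to orthogonal complements, $L=[x,L]+[y,L]$ holds if and only if $Z(x)\cap Z(y)=0$. Since for a regular $y$ the subalgebra $Z(y)$ is a Cartan subalgebra, and conversely every Cartan subalgebra of $L$ contains an element which is regular in $L$ and whose centraliser is that Cartan subalgebra, the theorem amounts to: \emph{for every such $x$ there is a Cartan subalgebra $H$ of $L$ with $H\cap Z(x)=0$.} Here one must assume $x$ lies in no proper ideal of $L$; otherwise the claim already fails for $L=\mathfrak{sl}_2\oplus\mathfrak{sl}_2$ and $x=(e,0)$, and the general semisimple case is then deduced one simple summand at a time. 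So I would at once reduce to $L$ simple.

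Next I would handle the passage to a \emph{regular} $y$ and, simultaneously, to an arbitrary field. The condition ``$L=[x,L]+[y,L]$'' says that the linear map $L\oplus L\to L$, $(u,v)\mapsto[x,u]+[y,v]$, has maximal rank, hence is the non-vanishing of one of the maximal minors of its matrix — a Zariski-open condition on $y$; the regular locus is likewise a non-empty Zariski-open subset of $L$. As $L$ is an irreducible affine space over the infinite field $k$, these two open sets meet as soon as the first is non-empty, and their intersection, being a non-empty open subset of affine space over an infinite field, carries a $k$-rational point — the desired regular $y$. Finally the first set is non-empty over $k$ precisely when it is non-empty over $\bar k$; and over $\bar k$, $L_{\bar k}$ is a product of simple ideals transitively permuted by the Galois group, so the image $\bar x$ of $x$ has non-zero projection to each factor and generates $L_{\bar k}$ as an ideal. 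Thus it is enough to exhibit, over $\bar k$, a Cartan subalgebra $H$ with $H\cap Z(\bar x)=0$.

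The core step is then short. Over $\bar k$, fix a Cartan subalgebra $H_0$ and suppose, for contradiction, that $Z(\bar x)\cap H\neq 0$ for \emph{every} Cartan subalgebra $H$. All Cartan subalgebras being conjugate under the adjoint group $G$, writing $H=gH_0g^{-1}$ and conjugating back shows that $G\cdot\bar x\subseteq\bigcup_{0\neq h\in H_0}Z(h)$. But $Z(h)=H_0\oplus\bigoplus_{\alpha(h)=0}L_\alpha$ takes only \emph{finitely many} values as $h$ runs over $H_0\setminus\{0\}$ (they are indexed by subsets of the finite root system), and each is a \emph{proper} subalgebra of $L_{\bar k}$ — proper because the roots span $H_0^{*}$, so no $h\neq0$ annihilates all of them. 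Hence the irreducible variety $G\cdot\bar x$ lies in a finite union of proper linear subspaces, so in one of them; but $\Span(G\cdot\bar x)$ is a non-zero ideal of $L_{\bar k}$, which we just saw is all of $L_{\bar k}$ — a contradiction. This produces $H$, hence $\bar y$, hence the required regular $y$. The one point that genuinely needs care is the interface between $k$ and $\bar k$: the openness and density arguments must be phrased so as to use nothing beyond the infinitude of a characteristic-$0$ field, and one must not lose sight of the reduction to $x$ generating $L$ as an ideal. The structural content — that $\bigcup_{h\neq0}Z(h)$ is a finite union of proper subalgebras — is elementary once isolated, and is, I expect, where the real weight of the proof lies.
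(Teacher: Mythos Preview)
The paper does not contain a proof of this theorem: it appears in the survey subsection~\ref{section5_3} and is merely quoted from \cite{N}, a paper listed as ``in preparation.'' There is therefore no argument in the present paper against which to compare your proposal.

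On its own merits, your argument is sound and, in one respect, sharper than the statement you were asked to prove. You correctly observe that the theorem is \emph{false} as stated for semisimple $L$: with $L=\mathfrak{sl}_2\oplus\mathfrak{sl}_2$ and $x=(e,0)$, the second component of $[x,L]+[y,L]$ equals $[y_2,\mathfrak{sl}_2]$, which is at most two-dimensional for any $y$, so the sum can never be all of $L$. (The introduction of the paper, in the paragraph following the statement of Corollary~\ref{corollary-1_7}, quotes the same result with ``simple'' rather than ``semisimple,'' which is presumably the intended hypothesis.) Your reduction to the simple case---and, over $\bar k$, to the situation where $\bar{x}$ has nonzero projection to each simple factor of $L_{\bar{k}}$, via the transitive Galois action on those factors---is the correct repair.

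The heart of your proof, namely that the adjoint orbit $G\cdot\bar{x}$ cannot lie in $\bigcup_{0\neq h\in H_0}Z(h)$ because the latter is a finite union of proper linear subspaces while the former is irreducible and spans a nonzero ideal of $L_{\bar k}$, is clean and correct. The Zariski-openness bridge between $k$ and $\bar{k}$ is also handled properly: the condition $L=[x,L]+[y,L]$ is the nonvanishing of at least one maximal minor with coefficients in $k$, so nonemptiness descends, and intersecting with the (nonempty, open) regular locus over the infinite field $k$ then furnishes the required $k$-rational regular $y$. I see no genuine gap.
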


We note that the above two theorems are related respectively to the concepts of 
generators and $1.5$ generators in Lie algebras in view of the following fact:
If $x$ and $y$ generate, in the Lie algebra sense, a Lie algebra $L$,  then 
$L = [L, x] + [L, y]$ by  \cite[Lemma 25c]{Bor1}.

\section{Examples of orthogonal Cartan subalgebras} \label{section6}

In the examples of orthogonal Cartan subalgebras we will see, corresponding to the classical groups, circulant matrices play a 
special role. We begin by reviewing circulant matrices, following the presentation in \cite{K-S}. Given 
a vector $\mathbf{a} = (a_0,a_1,\ldots,a_n) \in \mathbb{C}^n$, we define
\[ A = \left( \begin{array}{ccccc} a_0 & a_1 & \cdots & a_{n-2} & a_{n-1} \\
       a_{n-1} & a_0 & \cdots & a_{n-3} & a_{n-2} \\
       \vdots & \vdots & \ddots & \vdots & \vdots \\
       a_2 & a_3 & \cdots & a_0 & a_1 \\
       a_1 & a_2 & \cdots & a_{n-1} & a_0 \end{array} \right)
\]
Let $\epsilon = e^{\frac{2\pi i}{n}}$, which is a primitive $n$'th root of unity, and let
\[ x_l = \frac{1}{\sqrt{n}}(1,\epsilon^l, \epsilon^{2l}, \ldots, \epsilon^{(n-1)l}) \in \mathbb{C}^n \]
for $l = 0,\ldots,n$. We let
\[ U = (x_0,x_1, \ldots, x_{n-1}) \]
In other words, $U$ is the square matrix having as columns $x_0$, $x_1$,$\ldots$, $x_{n-1}$. It is easy to check that 
$U$ is both unitary and symmetric. We introduce the numbers
\[ \lambda_l = \sum_{j=0}^{n-1} \epsilon^{lj} a_j \]
for $l = 1,2,\ldots, n$. Let $\Lambda = \diag(\lambda_0,\lambda_1,\ldots,\lambda_{n-1})$. A calculation shows that
\[ A = U \Lambda U^{-1} \]
so that the $\lambda_l$s are the eigenvalues of $A$. An important observation is that $U$ does not depend on $A$, 
so that the same unitary matrix $U$ diagonalizes \textbf{all} 
complex circulant $n$ by $n$ matrices. In particular, this also implies that the space $\Circ(n)$ of all complex circulant $n$ 
by $n$ matrices is \textbf{abelian} (which can be seen directly too).

\subsection{The Lie algebras $\mathfrak{su}(n)$}

\begin{theorem} \label{thm-6_1} If $C$ is the Cartan subalgebra of $\mathfrak{su}(n)$ consisting zero trace skew-hermitian 
diagonal $n$ by $n$ complex matrices, and
\[ C' = \Circ(n) \cap \mathfrak{su}(n) \]
then $C$ and $C'$ are two orthogonal Cartan subalgebras of $\mathfrak{su}(n)$, with respect to minus the Killing form of 
$\mathfrak{su}(n)$.
\end{theorem}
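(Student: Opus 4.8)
The plan is to verify the three assertions in turn: that $C$ is a Cartan subalgebra, that $C'$ is a Cartan subalgebra, and that $C \perp C'$ with respect to the Killing form. The first is standard: $C$ is the usual diagonal Cartan subalgebra of $\mathfrak{su}(n)$, a maximal abelian subalgebra of dimension $n-1$. For $C'$, I would first observe that $\Circ(n)$ is abelian (as recalled just before the theorem, since a single unitary matrix $U$ simultaneously diagonalizes all circulants), so $C' = \Circ(n) \cap \mathfrak{su}(n)$ is an abelian subalgebra of $\mathfrak{su}(n)$. It then remains to compute $\dim_{\mathbb{R}} C'$ and check it equals $n-1$, which forces maximality (any abelian subalgebra of $\mathfrak{su}(n)$ of dimension $n-1$ is a Cartan subalgebra, since all Cartan subalgebras have this dimension and are maximal abelian). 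A circulant matrix is skew-Hermitian iff its defining vector $\mathbf{a} = (a_0,\dots,a_{n-1})$ satisfies $\bar a_j = -a_{n-j}$ (indices mod $n$), and the trace-zero condition is $a_0 = 0$ (more precisely $\mathrm{Re}\,a_0 = 0$, but combined with $\bar a_0 = -a_0$ this gives $a_0$ purely imaginary; the trace $n a_0$ vanishing forces $a_0 = 0$). Counting real parameters in the remaining $a_1,\dots,a_{n-1}$ under the constraint $\bar a_j = -a_{n-j}$ gives exactly $n-1$ in both the even and odd $n$ cases, so $\dim_{\mathbb{R}} C' = n-1$.

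For the orthogonality statement, since the Killing form of $\mathfrak{su}(n)$ is a negative multiple of $(X,Y) \mapsto \mathrm{tr}(XY)$, it suffices to show $\mathrm{tr}(XY) = 0$ for all $X \in C$ (diagonal, trace zero, skew-Hermitian) and all $Y \in C'$ (skew-Hermitian circulant, trace zero). If $X = \mathrm{diag}(x_0,\dots,x_{n-1})$ and $Y$ is the circulant with first row $(a_0,a_1,\dots,a_{n-1})$, then the diagonal entries of $Y$ are all equal to $a_0 = 0$, hence $\mathrm{tr}(XY) = \sum_k x_k (XY)_{kk} = \sum_k x_k \cdot a_0 = 0$. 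So the orthogonality is essentially immediate once one knows the diagonal of a trace-zero skew-Hermitian circulant vanishes.

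I expect no serious obstacle here; the only point requiring a little care is the dimension count for $C'$, where one must handle the parity of $n$ correctly (when $n$ is even there is an extra real constraint coming from the ``middle'' index $j = n/2$, where $\bar a_{n/2} = -a_{n/2}$ forces $a_{n/2}$ purely imaginary rather than free), and check that in every case the total is $n-1$. Alternatively, one can avoid the case analysis entirely by passing through the diagonalization $Y = U\Lambda U^{-1}$: conjugation by the fixed unitary symmetric matrix $U$ carries $C'$ isomorphically onto the space of diagonal matrices $\mathrm{diag}(\lambda_0,\dots,\lambda_{n-1})$ that are skew-Hermitian and trace zero, i.e.\ onto a diagonal Cartan subalgebra of $\mathfrak{su}(n)$ — wait, one must check $U \mathfrak{su}(n) U^{-1} = \mathfrak{su}(n)$, which holds since $U$ is unitary — and this immediately gives both that $C'$ is a Cartan subalgebra and its dimension. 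I would present the direct parameter count but remark on this cleaner alternative.
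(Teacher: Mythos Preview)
Your proposal is correct and follows the same approach as the paper: use that $\Circ(n)$ is abelian, then characterize $C'$ by the conditions $a_0=0$ and $a_{n-l}=-\bar a_l$ on the defining vector to get $\dim_{\mathbb{R}}C'=n-1$. You are in fact more complete than the paper's own proof, which handles only the dimension count and leaves the orthogonality $\mathrm{tr}(XY)=0$ (immediate from $a_0=0$) and the parity case-split implicit; your alternative argument via conjugation by the fixed unitary $U$ is a nice shortcut the paper does not mention.
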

\begin{proof} We already know that $C'$ is abelian, since $\Circ(n)$ is abelian, so it only remains to prove that 
$\dim_{\mathbb{R}}(C') = n-1$. This is so because $C'$ corresponds to vectors $\mathbf{a}$ having
\[ \left\{ \begin{array}{rcl} a_0 & = & 0 \\
               a_{n-l} & = & -\bar{a_l}, \qquad \text{ for $1 \leq l \leq n-1$ }
           \end{array} \right. \]
from which it follows indeed that the real dimension of $C'$ is $n-1$.
\end{proof}

\subsection{The Lie algebras $\mathfrak{sp}(n)$}

It is well known that $\mathfrak{gl}(n,\mathbb{H}) \subseteq \mathfrak{gl}(2n,\mathbb{C})$. This relies on the observation that a 
quaternion $x$ can be written in a unique way as $x = u+jv$, where $u$ and $v$ are complex numbers. It can also be verified that the map 
\[ x \mapsto \left( \begin{array}{cc} u & -\bar{v} \\
                                  v & \bar{u} \end{array} \right) \]
is an injective algebra homomorphism from $\mathbb{H}$ into $\mathfrak{gl}(2,\mathbb{C})$.

The Lie algebras $\mathfrak{sp}(n)$ can be written as
\[ \mathfrak{sp}(n) = \mathfrak{gl}(n,\mathbb{H}) \cap \mathfrak{u}(2n) \]
where both Lie algebras on the right-hand side are understood as Lie subalgebras of $\mathfrak{gl}(2n,\mathbb{C})$, using 
the remark above. We then have the following theorem.

\begin{theorem} \label{thm-6_2} If $C$ is the Cartan subalgebra of $\mathfrak{sp}(n)$ consisting of complex diagonal pure imaginary 
$2n$ by $2n$ matrices, and
\[ C' = \Circ(2n) \cap \mathfrak{sp}(n) \]
where both Lie algebras on the right-hand side are thought of as Lie subalgebras of $\mathfrak{gl}(2n,\mathbb{C})$ in the 
usual way, then $C$ and $C'$ are two orthogonal Cartan subalgebras of $\mathfrak{sp}(n)$, with respect to minus the Killing form of 
$\mathfrak{sp}(n)$.
\end{theorem}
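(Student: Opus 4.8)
The plan is to follow the proof of Theorem \ref{thm-6_1} essentially verbatim. As there, $C'$ is automatically abelian, being the intersection of $\mathfrak{sp}(n)$ with the abelian algebra $\Circ(2n)$; and since every element of the compact Lie algebra $\mathfrak{sp}(n)$ is semisimple, $C'$ is a Cartan subalgebra as soon as one knows that $\dim_{\mathbb{R}} C' = n = \rank\mathfrak{sp}(n)$. Thus the entire substance of the theorem is this one dimension count.

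To carry it out I would parametrize a $2n \times 2n$ circulant $A$ by its symbol $\mathbf{a} = (a_0, a_1, \ldots, a_{2n-1})$ and impose the two conditions defining $\mathfrak{sp}(n) = \mathfrak{gl}(n,\mathbb{H}) \cap \mathfrak{u}(2n)$. Skew-hermiticity of $A$ becomes, in circulant terms, the relations $a_0 \in i\mathbb{R}$ and $a_{2n-l} = -\overline{a_l}$ for $1 \le l \le 2n-1$. Membership of $A$ in $\mathfrak{gl}(n,\mathbb{H})$ becomes, once the $2n$ coordinates are grouped into the $n$ quaternionic slots and each $2\times 2$ block of $A$ is written in the form $\left(\begin{smallmatrix} u & -\bar v \\ v & \bar u\end{smallmatrix}\right)$, a further system of linear relations among the $a_j$. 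Imposing both systems at once and counting real degrees of freedom should leave exactly $n$ of them; in particular the common diagonal entry $a_0$ of such a circulant is forced to vanish, being simultaneously purely imaginary (skew-hermiticity) and pinned down by the quaternionic relations.

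Granting the count, orthogonality of $C$ and $C'$ is the same one-line argument as in Theorem \ref{thm-6_1}. Up to a positive scalar, minus the Killing form of $\mathfrak{sp}(n)$ is $(X,Y) \mapsto -\operatorname{tr}(XY)$, the trace being taken in $\mathfrak{gl}(2n,\mathbb{C})$; and for a diagonal $D \in C$ and a circulant $A \in C'$ one has $\operatorname{tr}(DA) = a_0 \operatorname{tr}(D) = 0$, since $a_0 = 0$ (equivalently, since every element of $\mathfrak{sp}(n) \subseteq \mathfrak{su}(2n)$ is traceless). Hence $C$ and $C'$ are orthogonal, and therefore orthogonal Cartan subalgebras.

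I expect the dimension count of the second paragraph to be the main obstacle: one must set up the quaternionic structure explicitly in ``circulant coordinates'' and keep careful track of how the cyclic relations among the $a_j$ interlock with the quaternionic relations, verifying that the total is exactly $n$ for every $n$ (the realization of $\mathfrak{sp}(n)$ inside $\mathfrak{gl}(2n,\mathbb{C})$ must be chosen compatibly for this to happen). The conceptually cleanest way to organize it --- and the one I would ultimately aim for --- uses that the $2n$-point discrete Fourier matrix $U$ simultaneously diagonalizes $\Circ(2n)$: for a realization of $\mathfrak{sp}(n)$ in which $U$ belongs to $\operatorname{Sp}(n)$, conjugation by $U$ carries the diagonal Cartan $C$ onto $\Circ(2n) \cap \mathfrak{sp}(n) = C'$, exhibiting $C'$ at once as a Cartan subalgebra conjugate to $C$; the whole matter then reduces to producing such a realization. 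Either way, the algebra surrounding the quaternionic structure is where the real work lies.
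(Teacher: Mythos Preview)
Your outline is correct in spirit and matches the paper's architecture: both reduce the theorem to the single claim $\dim_{\mathbb{R}} C' = n$, with abelianness coming for free from $\Circ(2n)$ and orthogonality from the vanishing diagonal. The difference is in how that dimension count is executed.

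You propose to write out the circulant symbol $(a_0,\ldots,a_{2n-1})$, impose the skew-hermitian relations and the $2\times 2$-block quaternionic relations simultaneously, and count what survives; you acknowledge this is the crux and leave it undone. The paper sidesteps the bookkeeping entirely with one structural observation: since $\Circ(2n)$ is a complex $2n$-dimensional (hence real $4n$-dimensional) space sitting inside $\mathfrak{gl}(2n,\mathbb{C})$, and since the quaternionic structure on $\mathbb{C}^{2n}\cong\mathbb{H}^n$ gives a real decomposition
\[
\Circ(2n) \;=\; C' \oplus iC' \oplus jC' \oplus kC',
\]
one reads off $\dim_{\mathbb{R}} C' = 4n/4 = n$ immediately. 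In other words, the paper exploits that $\Circ(2n)$ is itself a quaternionic subspace and that $C'$ is its ``real'' slice, so the four pieces $C',\,iC',\,jC',\,kC'$ are isomorphic real subspaces summing to the whole. This replaces your case-by-case relation count with a one-line quotient.

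Your proposed alternative---choosing a realization in which the discrete Fourier matrix $U$ lies in $\operatorname{Sp}(n)$ and then conjugating $C$ to $C'$---is not the paper's route and would itself need verification that such a compatible realization exists; the quaternionic decomposition above gets you there without it.
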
 
\begin{proof}The only thing to check is that $\dim_{\mathbb{R}}(C') = n$. This essentially follows from the fact that $\Circ(2n)$ is a complex 
$2n$-dimensional subspace of $\mathfrak{gl}(2n,\mathbb{C})$, which admits the decomposition
\[ \Circ(2n) = C' \oplus iC' \oplus jC' \oplus kC' \]
the latter being a \emph{real} vector space decomposition, from which it can be deduced that $\dim_{\mathbb{R}}(C') = n$, as claimed.  
\end{proof}

\subsection{The Lie algebras $\mathfrak{so}(n)$}

Define
\begin{align*} C_{2k} &= \left\{ \left( \begin{array}{cc} 0_k & \Lambda \\
                                             -\Lambda & 0_k \end{array} \right): \text{ $\Lambda$ is a real $k$ by $k$ diagonal 
matrix} \right\} \\
C_{2k+1} &= \left\{ \left( \begin{array}{ccc} 0_k & \Lambda & 0_{k,1} \\
                                             -\Lambda & 0_k & 0_{k,1} \\
                                              0_{1,k} & 0_{1,k} & 0 \end{array} \right): \text{ $\Lambda$ is a real $k$ by $k$ diagonal 
matrix} \right\}
\end{align*}
Here $0_k$ denotes the zero $k$ by $k$ matrix, while for instance $0_{j,k}$ denotes the $j$ by $k$ zero matrix. It is easy to 
check that $C_n$ is a Cartan subalgebra of $\mathfrak{so}(n)$ for all $n \geq 3$. Our plan in this section is 
to construct for each $n$, a Cartan subalgebra $C'_n$ which is orthogonal to $C_n$. Our next lemma will allow us to restrict our 
attention to $\mathfrak{so}(n)$, with $n$ even.

\begin{lemma} \label{lemma-6_3} if $C'_{2k}$ is a Cartan subalgebra of $\mathfrak{so}(2k)$ orthogonal to $C_{2k}$, and if 
$\iota: \mathfrak{so}(2k) \hookrightarrow \mathfrak{so}(2k+1)$ denotes the inclusion of Lie algebras which satisfies $\iota(C_{2k}) = 
C_{2k+1}$, then $\iota(C'_{2k})$ is a Cartan subalgebra of $\mathfrak{so}(2k+1)$ which is orthogonal to $C_{2k+1}$.
\end{lemma}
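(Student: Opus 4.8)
The plan is to verify the two assertions separately: first that $\iota(C'_{2k})$ is a Cartan subalgebra of $\mathfrak{so}(2k+1)$, and then that it is orthogonal to $C_{2k+1}$ with respect to (minus) the Killing form of $\mathfrak{so}(2k+1)$.

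For the first assertion I would use a bare dimension count. As a Cartan subalgebra of the compact semisimple Lie algebra $\mathfrak{so}(2k)$, the space $C'_{2k}$ is abelian with $\dim_{\mathbb{R}} C'_{2k} = \rank(\mathfrak{so}(2k)) = k$. Since $\iota$ is an injective homomorphism of Lie algebras, $\iota(C'_{2k})$ is an abelian subalgebra of $\mathfrak{so}(2k+1)$ of real dimension $k$. But $\rank(\mathfrak{so}(2k+1)) = k$ as well, so $\iota(C'_{2k})$ is an abelian subalgebra of maximal possible dimension and therefore cannot be properly contained in a larger abelian subalgebra; it is thus maximal abelian. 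In a compact semisimple Lie algebra the maximal abelian subalgebras are exactly the Cartan subalgebras, so $\iota(C'_{2k})$ is a Cartan subalgebra of $\mathfrak{so}(2k+1)$.

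For the second assertion the only delicate point is that the Killing form of $\mathfrak{so}(2k+1)$ does not restrict to the Killing form of $\mathfrak{so}(2k)$ on $\iota(\mathfrak{so}(2k))$, so orthogonality in the smaller algebra must be transported with care. This is exactly the content of Remark~\ref{remark-3_5}: applying it with $S = \iota(\mathfrak{so}(2k))$, $L = \mathfrak{so}(2k+1)$, and the Cartan subalgebra $C = \iota(C_{2k}) = C_{2k+1}$ of $S$, the orthogonal complement of $C$ in $S$ (with respect to the Killing form of $S$) is contained in the orthogonal complement of $C$ in $L$ (with respect to the Killing form of $L$). By hypothesis $C'_{2k}$ is contained in the orthogonal complement of $C_{2k}$ inside $\mathfrak{so}(2k)$, so $\iota(C'_{2k})$ is contained in the orthogonal complement of $C_{2k}$ inside $\iota(\mathfrak{so}(2k))$, hence in the orthogonal complement of $C_{2k+1}$ inside $\mathfrak{so}(2k+1)$. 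Since orthogonality is unaffected by negating the form, this is precisely $\iota(C'_{2k}) \perp C_{2k+1}$, as required.

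I do not expect any real obstacle; the whole point is the transfer step above, and Remark~\ref{remark-3_5} --- ultimately \cite[Lemma~2.2]{Akh}, which says $Z(a)^{\perp} = [a,L]$ independently of the choice of invariant nondegenerate symmetric bilinear form --- takes care of it. Should one wish to avoid that citation, one can instead compute directly: for the block-diagonal embedding $\iota$ the Killing form of $\mathfrak{so}(m)$ is $(m-2)\operatorname{tr}(XY)$, so the form inherited by $\iota(\mathfrak{so}(2k))$ from $\mathfrak{so}(2k+1)$ is $(2k-1)\operatorname{tr}(XY)$, a positive multiple of the intrinsic Killing form $(2k-2)\operatorname{tr}(XY)$ of $\mathfrak{so}(2k)$; the two therefore induce the same orthogonality relation, making the transfer immediate.
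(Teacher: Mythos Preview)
Your proposal is correct and aligns with the paper's own proof, which is a one-line appeal to the equality $\rank(\mathfrak{so}(2k))=\rank(\mathfrak{so}(2k+1))=k$ together with the fact that the Killing form of $\mathfrak{so}(2k+1)$ restricts on $\iota(\mathfrak{so}(2k))$ to a constant multiple of the Killing form of $\mathfrak{so}(2k)$. Your alternative at the end is exactly that argument; your primary route via Remark~\ref{remark-3_5} is a valid but slightly heavier substitute for the same step.
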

\begin{proof} This follows from the fact that $\mathfrak{so}(2k)$ and $\mathfrak{so}(2k+1)$ both have rank $k$, and 
that the restriction of the Killing form of $\mathfrak{so}(2k+1)$ to $\mathfrak{so}(2k)$ is a constant multiple of the 
Killing form of $\mathfrak{so}(2k)$. \end{proof} 

The next lemma will enable us to restrict our attention further to just $\mathfrak{so}(4)$ and $\mathfrak{so}(6)$. But first, we 
introduce the following notation. If $X$ is a $k$ by $k$ matrix, we denote by $\tilde{X}$ the following $k+2$ by $k+2$ matrix
\[ \tilde{X} = X \oplus 0_2 \]
Also, if $x$ is $2k$ by $2k$ matrix, one can write $x$ as
\[ x = \left( \begin{array}{cc} A & B \\
                               C & D \end{array} \right) \]
where $A$, $B$, $C$ and $D$ are each $k$ by $k$ matrices. We then denote by $\hat{x}$ the following $2k+4$ by $2k+4$ matrix
\[ \hat{x} = \left( \begin{array}{cc} \tilde{A} & \tilde{B} \\
                               \tilde{C} & \tilde{D} \end{array} \right) \]
\begin{lemma} \label{lemma-6_4} If $\Span_{\mathbb{R}} \{ x_i: 1 \leq i \leq k \}$ is a Cartan subalgebra of $\mathfrak{so}(2k)$ 
which is orthogonal to $C_{2k}$, then $\Span_{\mathbb{R}} \{ \hat{x}_i, y, z; 1 \leq i \leq k \}$ is a Cartan subalgebra of 
$\mathfrak{so}(2k+4)$ which is orthogonal to $C_{2k+4}$, where
\begin{align*} y &= \left( \begin{array}{cc} 0_k & S \\
                                             -S & 0_k \end{array} \right) \\
               z &= \left( \begin{array}{cc} 0_k & T \\
                                             T & 0_k \end{array} \right)
\end{align*}
where
\begin{align*}
S &= 0_k \oplus \left( \begin{array}{cc} 0 & 1 \\
                                         1 & 0 \end{array} \right) \\
T &= 0_k \oplus \left( \begin{array}{cc} 0 & -1 \\
                                         1 & 0 \end{array} \right)
\end{align*}
\end{lemma}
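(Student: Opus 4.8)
The plan is to verify three things for the candidate subspace $C'_{2k+4} := \Span_{\mathbb{R}}\{\hat{x}_i,\,y,\,z : 1\le i\le k\}$: that it has the right dimension $k+2 = \rank(\mathfrak{so}(2k+4))$, that it is abelian, and that it is orthogonal to $C_{2k+4}$ with respect to (minus) the Killing form. Since on $\mathfrak{so}(m)$ the Killing form is a fixed positive multiple of $X,Y\mapsto -\tfrac12\operatorname{tr}(XY)$ (up to the usual normalization constant $m-2$), orthogonality can be checked using the trace form directly, which makes all three verifications into block-matrix bookkeeping. First I would record the block shapes: $\hat{x}_i$ lives in the ``$\mathfrak{so}(2k)\oplus 0$'' copy but with the two extra rows/columns inserted in the middle of each $k\times k$ block (via $\tilde{\,\cdot\,}$), while $y$ and $z$ are supported entirely on the last two of the ``first half'' coordinates and the last two of the ``second half'' coordinates — i.e. on exactly the four indices that $\hat{x}_i$ ignores. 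This disjointness of supports is the structural fact that drives everything.

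Next I would check that $C'_{2k+4}$ is abelian. The brackets $[\hat{x}_i,\hat{x}_j]$ vanish because the map $x\mapsto\hat{x}$ is (up to reordering basis vectors) the inclusion $\mathfrak{so}(2k)\hookrightarrow\mathfrak{so}(2k+4)$ sending the given Cartan subalgebra into $\mathfrak{so}(2k)\oplus 0_4 \subseteq \mathfrak{so}(2k+4)$, so it is a Lie algebra homomorphism and the $x_i$ already commute by hypothesis. The brackets $[\hat{x}_i,y]$ and $[\hat{x}_i,z]$ vanish by the support disjointness just noted (the nonzero entries of $\hat{x}_i$ and of $y,z$ occupy complementary index blocks, so their matrix products are zero in both orders). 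Finally $[y,z]=0$ is a direct $4\times 4$ computation: both $y$ and $z$ are built from the $2\times 2$ blocks $\left(\begin{smallmatrix}0&1\\1&0\end{smallmatrix}\right)$ and $\left(\begin{smallmatrix}0&-1\\1&0\end{smallmatrix}\right)$, which commute (one is symmetric, one is skew, but their product is $\pm\operatorname{Id}$ in both orders), so the off-diagonal block structure of $y$ and $z$ makes the commutator vanish — this is essentially the statement that $S$ and $T$ generate an abelian (indeed $\cong\mathfrak{so}(2)\oplus\mathbb{R}$-looking) pair of matrices in the relevant $4\times 4$ corner. I should double-check that $y,z\in\mathfrak{so}(2k+4)$: $y$ is manifestly skew-symmetric since $S$ is symmetric and it sits in the $\left(\begin{smallmatrix}0&S\\-S&0\end{smallmatrix}\right)$ pattern; for $z$ one checks $\left(\begin{smallmatrix}0&T\\T&0\end{smallmatrix}\right)^{\!\top} = \left(\begin{smallmatrix}0&T^\top\\T^\top&0\end{smallmatrix}\right) = \left(\begin{smallmatrix}0&-T\\-T&0\end{smallmatrix}\right)$ since $T^\top = -T$, confirming skewness. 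For the dimension count: the $\hat{x}_i$ are linearly independent because the $x_i$ are and $x\mapsto\hat{x}$ is injective, and $y,z$ are independent from all of them and from each other because they have nonzero entries precisely where all $\hat{x}_i$ vanish; hence $\dim_{\mathbb{R}} C'_{2k+4} = k+2$, matching $\rank(\mathfrak{so}(2k+4))$, so $C'_{2k+4}$ is a genuine Cartan subalgebra.

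It remains to check orthogonality to $C_{2k+4}$. Write a general element of $C_{2k+4}$ as $w = \left(\begin{smallmatrix}0_{k+2}&\Lambda\\-\Lambda&0_{k+2}\end{smallmatrix}\right)$ with $\Lambda$ real diagonal of size $k+2$. Against $\hat{x}_i$: decomposing $\hat{x}_i = \left(\begin{smallmatrix}\tilde{A}_i&\tilde{B}_i\\\tilde{C}_i&\tilde{D}_i\end{smallmatrix}\right)$, the trace $\operatorname{tr}(w\,\hat{x}_i)$ works out to $-\operatorname{tr}(\Lambda\tilde{C}_i) + \operatorname{tr}(\Lambda\tilde{B}_i)$, and since $\tilde B_i, \tilde C_i$ have their last two rows and columns zero, only the top-left $k\times k$ corner of $\Lambda$ contributes; that reduces to the pairing of the original $x_i$ against the original diagonal Cartan element $C_{2k}$, which is zero by the hypothesis that $\Span\{x_i\}\perp C_{2k}$. (To be careful I would note the index reordering that turns $\hat{x}$ into the ``middle insertion'' picture is an orthogonal change of basis, so it preserves the trace form; alternatively one argues entirely in the ordering already fixed in the statement.) Against $y$ and $z$: $\operatorname{tr}(w\,y)$ and $\operatorname{tr}(w\,z)$ involve $\operatorname{tr}$ of a diagonal matrix times $S$ or $T$, and since $S,T$ have zero diagonal, these traces vanish. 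Hence every basis element of $C'_{2k+4}$ is orthogonal to every element of $C_{2k+4}$, so $C'_{2k+4}\perp C_{2k+4}$, completing the proof.

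The main obstacle is purely organizational: the map $x\mapsto\hat{x}$ inserts the two new coordinates in the \emph{middle} of each block rather than at the end, so one must either carry a basis permutation $\sigma$ consistently through every trace computation or reformulate $C_{2k+4}$ in the permuted ordering; once the conjugating permutation is fixed and noted to be orthogonal (hence trace-form preserving), all the computations above are routine and the support-disjointness observations make them nearly immediate.
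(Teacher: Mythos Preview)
Your approach is the same as the paper's---the paper's entire proof is the single sentence ``It suffices to prove that the $\hat{x}_i$'s, $y$ and $z$ all mutually commute, which is easy to check''---and you have spelled out the details (commutation, dimension, orthogonality) that the paper leaves implicit. The support-disjointness observation is exactly the right way to organize everything.

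There is, however, one concrete error in your verification of $[y,z]=0$. The $2\times 2$ blocks
\[
s=\begin{pmatrix}0&1\\1&0\end{pmatrix},\qquad t=\begin{pmatrix}0&-1\\1&0\end{pmatrix}
\]
do \emph{not} commute, and their products are not $\pm\Id$: one computes $st=\diag(1,-1)$ and $ts=\diag(-1,1)=-st$, so they \emph{anti}-commute. This is precisely what is needed, because the block shapes of $y$ (with off-diagonal blocks $S,-S$) and $z$ (with off-diagonal blocks $T,T$) give
\[
[y,z]=\begin{pmatrix} ST+TS & 0\\ 0 & -(ST+TS)\end{pmatrix},
\]
and $ST+TS=0$ since $s$ and $t$ anti-commute. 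So your conclusion $[y,z]=0$ is correct, but the stated reason (``which commute\ldots their product is $\pm\Id$ in both orders'') is wrong and should be replaced by the anti-commutation computation above. With that fix, the argument is complete.
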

\begin{proof} It suffices to prove that the $\hat{x}_i$s, $y$ and $z$ all mutually commute, which is easy to check.
\end{proof}
Hence, using lemmas \ref{lemma-6_3} and \ref{lemma-6_4}, it suffices for our purposes to construct a Cartan 
subalgebra of $\mathfrak{so}(4)$, respectively $\mathfrak{so}(6)$ which is orthogonal to $C_4$, respectively $C_6$. Then we can, 
using the constructions in these two lemmas, construct orthogonal Cartan subalgebras of $\mathfrak{so}(n)$ for any $n \geq 4$ (the case 
of $\mathfrak{so}(3)$ is trivial).

Let us consider first $\mathfrak{so}(4)$. Define
\begin{align*} x_1 &= \left( \begin{array}{cccc} 0 & 0 & 0 & 1 \\
                                                 0 & 0 & 1 & 0 \\
                                                 0 &-1 & 0 & 0 \\
                                                -1 & 0 & 0 & 0 \end{array} \right) \\
x_2 &= \left( \begin{array}{cccc} 0 & 0 & 0 & -1 \\
                                  0 & 0 & 1 & 0 \\
                                  0 &-1 & 0 & 0 \\
                                  1 & 0 & 0 & 0 \end{array} \right)
\end{align*}
It is easy to check that $x_1$ and $x_2$ span a Cartan subalgebra of $\mathfrak{so}(4)$ which is orthogonal to $C_4$.

We finally consider the case of $\mathfrak{so}(6)$. Define
\begin{align*} A &= \left( \begin{array}{ccc} 0 & -1 & 0 \\
                                               1 &  0 & 0 \\
                                               0 &  0 & 0 \end{array} \right) \\
B &= \left( \begin{array}{ccc} 0 & 0 & 0 \\
                                0 & 0 & 0 \\
                                1 & 0 & 0 \end{array} \right) \\
C &= \left( \begin{array}{ccc} 0 & 0 & 0 \\
                               0 & 0 &-1 \\
                               0 & 1 & 0 \end{array} \right)
\end{align*}
We then define $x_1 = A \oplus 0_3$, and
\[ x_2 = \left( \begin{array}{cc} 0_3 & B \\
                                 -B^T & 0_3 \end{array} \right) \]
as well as $x_3 = 0_3 \oplus C$.

Then a straightforward computation shows that $x_1$, $x_2$ and $x_3$ mutually commute, so that they span a Cartan subalgebra of 
$\mathfrak{so}(6)$, which is also orthogonal to $C_6$ (since each of $x_1$, $x_2$ and $x_3$ is orthogonal to $C_6$). This finishes our 
description of orthogonal Cartan subalgebras for $\mathfrak{so}(n)$, for $n \geq 4$, with the case of $n=3$ being trivial. 

\appendix
\section{Some consequences of Kostant's convexity theorem} \label{sectionA}

As noted in the introduction, we shall present, in this Appendix, a slight simplification of Karl-Hermann Neeb's proof of 
Goto's Theorem, additive version, using Kostant's Convexity Theorem \cite[p. 653]{H-M2}. His proof essentially passes through 
the proof of Theorem \ref{thm-3_2}.
 
\begin{theorem}[Kostant's convexity theorem] \label{thm-A_1} Let $L=\Lie(G)$ be the Lie algebra of a compact semisimple Lie group. 
Let $C=\Lie(T)$ be 
a maximal toral subalgebra of $L$, corresponding to a maximal torus $T \subseteq G$. Let $p: L \to C$ be the orthogonal projection of 
$L$ onto $C$ with respect to the negative of the Killing form on $L$. Let $x$ be an element of $C$, and let $W$ be the Weyl group of 
$(G,T)$ or $(L,C)$. Then $p(G.x) = \Conv(W.x)$. 
\end{theorem}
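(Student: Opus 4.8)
The plan is to reduce the convexity statement to its two classical halves: the inclusion $p(G.x) \subseteq \Conv(W.x)$ and the reverse inclusion $\Conv(W.x) \subseteq p(G.x)$. For the first inclusion, I would argue that $p(G.x)$ is a compact subset of $C$ (being the continuous image of the compact orbit $G.x$ under the linear projection $p$), and that it is $W$-invariant: indeed $W = N(T)/T$ acts on $C$ by restrictions of $\operatorname{Ad}(n)$, these maps commute with $p$ because they are orthogonal with respect to the negative Killing form and preserve $C$, and $W.(G.x) = G.x$. Since $p(G.x)$ is a compact $W$-invariant set containing $x$, its convex hull contains $\Conv(W.x)$; the nontrivial direction here is the opposite containment $p(G.x) \subseteq \Conv(W.x)$, which follows from the standard observation that for each linear functional $\lambda \in C^*$ the maximum of $\lambda$ on $G.x$ is attained at a point whose projection lies in the convex hull of $W.x$ — equivalently, a max–min / critical point argument showing every element of $p(G.x)$ is dominated in every direction by some element of $\Conv(W.x)$.

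For the reverse inclusion I would use the connectedness and compactness of $G.x$ together with the fact that $p(G.x)$ already contains all of $W.x$ (since $W.x \subseteq C$ and $p|_C = \operatorname{Id}$, while $W.x \subseteq G.x$). The key step is then to upgrade from "contains the vertices" to "contains the whole polytope": here one invokes that $p(G.x)$, as the image of a connected set, is connected, and that it is convex — or, following Kostant's original route, one shows directly that the moment-type map $g \mapsto p(g.x)$ hits every point of $\Conv(W.x)$ by a degree or connectedness argument on the flag manifold $G/T$. I expect this to be the main obstacle: proving convexity (or full surjectivity onto the polytope) of $p(G.x)$ is precisely the substance of Kostant's theorem and cannot be obtained by soft arguments alone. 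Since the statement is labelled as Kostant's convexity theorem and is being quoted for use in the Appendix rather than reproved from scratch, I would at this point cite the compact-group version as in \cite[p.~653]{H-M2} or \cite[Thm.~8.1]{Kos}, and restrict the written proof to verifying the hypotheses — $L$ compact semisimple, $C$ maximal toral, $p$ the Killing-orthogonal projection, $W = W(G,T)$ — match those of the cited statement.

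In summary, the honest proof is a citation: the result is Kostant's convexity theorem, whose compact semisimple form is exactly what is stated, and the only work is to note that the negative of the Killing form provides the requisite $\operatorname{Ad}(G)$-invariant inner product making $p$ the orthogonal projection and $W$ act orthogonally, so the hypotheses of the classical theorem are literally satisfied.
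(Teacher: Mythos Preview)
Your conclusion --- that this is a citation rather than a proof to be redone --- matches the paper, but the paper's reduction is slightly more specific than yours. You assert that the compact semisimple form is ``exactly what is stated'' in Kostant's theorem and that the hypotheses are ``literally satisfied''; this is not quite right. Kostant's original result \cite[Thm.~8.2]{Kos} is formulated for a Cartan decomposition $\mathfrak{g} = \mathfrak{k} \oplus \mathfrak{p}$ of a real semisimple Lie algebra, with the projection going onto a maximal abelian subspace $\mathfrak{a} \subseteq \mathfrak{p}$ and the orbit taken under the compact group $K$. A compact semisimple $L$ has trivial $\mathfrak{p}$ in its own Cartan decomposition, so one does not apply Kostant's theorem to $L$ directly. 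The paper's argument is to pass to the complexification $L_{\mathbb{C}}$, regarded as a real semisimple Lie algebra, whose Cartan decomposition is $\mathfrak{k} = L$, $\mathfrak{p} = iL$; then $iC$ is a maximal abelian subspace of $\mathfrak{p}$, the relevant $K$ is $G$, and Kostant's theorem yields the stated identity after identifying $iC$ with $C$. Your extended sketch of the two inclusions is fine as motivation but, as you yourself note, does not constitute a proof; the paper does not attempt that route at all and simply records the complexification trick.
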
 
This version of Kostant's Convexity Theorem follows very easily from the original version \cite[Theorem 8.2]{Kos} by taking 
the complexification $L(G)_{\mathbb{C}}$ of $L(G)$, viewed as a real semisimple Lie algebra. Then $L(G)_{\mathbb{C}}$ has a 
Cartan decomposition $\mathfrak{k} \oplus \mathfrak{p}$ where $\mathfrak{k} = L(G)$ and $\mathfrak{p} =  iL(G)$. So we apply 
Theorem 8.2 of \cite{Kos} to $L(G)_{\mathbb{C}}$ viewed as a real semisimple Lie algebra, with $\mathfrak{k}$ and 
$\mathfrak{p}$ as above.
\begin{lemma}\cite[lemma $2.2$]{Akh} \label{lemma-A_2} Let $V$ be a real vector space and $W \subset GL(V)$ a finite linear group acting 
without fixed vectors (other than $0$). Then the convex hull of any $W$-orbit contains $0 \in V$. \end{lemma}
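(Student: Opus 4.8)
The plan is to prove Lemma \ref{lemma-A_2} by a standard averaging/centroid argument combined with the no-fixed-vector hypothesis. Fix any $v \in V$ and form the centroid of its $W$-orbit,
\[ \bar{v} = \frac{1}{|W|} \sum_{w \in W} w \cdot v. \]
This point is manifestly a convex combination of the points of the orbit $W \cdot v$, hence it lies in $\Conv(W \cdot v)$. So it suffices to show $\bar{v} = 0$.

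To see this, observe that $\bar{v}$ is fixed by every element of $W$: for $w_0 \in W$, left-multiplication by $w_0$ merely permutes the summands, so $w_0 \cdot \bar{v} = \bar{v}$. Thus $\bar{v}$ is a $W$-fixed vector. By hypothesis $W$ acts on $V$ without nonzero fixed vectors, so $\bar{v} = 0$. Therefore $0 \in \Conv(W \cdot v)$, and since $v$ was arbitrary this holds for every $W$-orbit. The hypothesis that $W$ is finite is used in two places: to make the averaging sum finite (so the centroid is genuinely a finite convex combination of orbit points), and — combined with the fixed-point-free assumption — to conclude that the only $W$-invariant vector is $0$. Note for the application in this paper that Lemma \ref{lemma-2_1} supplies exactly this fixed-point-free condition for $W$ generated by a Coxeter element, so the lemma applies in that setting.

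There is essentially no obstacle here: the argument is a one-line centroid computation, and the only thing to be careful about is spelling out that the centroid is both (a) a convex combination of orbit elements and (b) $W$-invariant. The statement is deliberately elementary — it is quoted from \cite{Akh} precisely because this clean formulation is what feeds into the Kostant-convexity proof of Theorem \ref{thm-1_1} in the appendix. If one wanted a slightly different route, one could instead argue by contradiction: if $0 \notin \Conv(W \cdot v)$, separate $0$ from the (compact, convex) hull by a hyperplane with normal $\xi$, so that $\langle \xi, w \cdot v \rangle > 0$ for all $w$; summing over $w \in W$ gives $\langle \xi, \bar{v} \rangle > 0$, contradicting $\bar{v} = 0$. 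But the direct centroid argument is cleaner and I would present that one.
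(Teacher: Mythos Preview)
Your proof is correct and is exactly the centroid argument the paper sketches. One small correction to your closing side remark: in the appendix application $W$ is the full Weyl group acting on $C$, and its fixed-point-freeness comes from the semisimplicity of $L$, not from Lemma \ref{lemma-2_1} (which concerns a single Coxeter element and is used in Section \ref{section2}, not in Appendix \ref{sectionA}).
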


For the reader's convenience, we explain the idea of its short proof in \cite{Akh} which is that the centroid $z$ of the points of any 
$W$-orbit in $V$ is fixed under the action of $W$. But $z=0$ is the only fixed point under $W$. Hence $z=0$ belongs to the 
convex hull of any $W$-orbit. 

Now we give another proof of Theorem \ref{thm-3_2}.
\begin{theorem} \label{thm-A_3} (= Theorem \ref{thm-3_2}) In the setting of theorem \ref{thm-A_1}, let $C^{\perp}$ be the orthogonal complement of $C$ (with respect 
to negative the Killing form on $L$). Then for every element $x$ of $L$, there exists an element $g \in G$ such that $g.x$ belongs to 
$C^{\perp}$.  
\end{theorem}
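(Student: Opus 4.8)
The plan is to deduce Theorem~\ref{thm-A_3} directly from Kostant's convexity theorem (Theorem~\ref{thm-A_1}) together with the fixed-point-free lemma (Lemma~\ref{lemma-A_2}). First I would reduce to the case $x \in C$: since the exponential map is surjective on a compact connected semisimple Lie group, or more simply since every element of $L$ is conjugate under $G$ into the maximal toral subalgebra $C$, it suffices to prove the statement for $x \in C$. Replacing $x$ by a conjugate, assume $x \in C$.

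Next I would apply Theorem~\ref{thm-A_1}: with $p : L \to C$ the orthogonal projection with respect to minus the Killing form, one has $p(G.x) = \Conv(W.x)$, where $W$ is the Weyl group of $(G,C)$. The Weyl group $W$ acts on $C$ as a finite reflection group with no nonzero fixed vectors (this is classical, and is in fact a weak consequence of Lemma~\ref{lemma-2_1}). Hence Lemma~\ref{lemma-A_2} applies to the action of $W$ on $C$, giving $0 \in \Conv(W.x)$. Therefore $0 \in p(G.x)$, i.e.\ there is some $g \in G$ with $p(g.x) = 0$. But $p(g.x) = 0$ means precisely that $g.x$ is orthogonal to $C$, that is, $g.x \in C^{\perp}$. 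This completes the argument.

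The only step requiring any care is the reduction to $x \in C$ and the observation that $W$ acts without nonzero fixed points on $C$; both are standard facts about compact semisimple Lie groups (every adjoint orbit meets $C$, and $\bigcap_{w \in W} \Ker(w - \Id) = 0$ since the roots span $C^*$). I do not expect a genuine obstacle here: once Kostant's convexity theorem is granted in the form of Theorem~\ref{thm-A_1}, the proof is essentially the observation that the centroid of any Weyl orbit is a $W$-fixed point, hence zero, hence lies in the convex hull, hence in the image of the projection of the $G$-orbit.
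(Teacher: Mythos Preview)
Your argument is correct and is exactly the paper's approach: the paper's proof of Theorem~\ref{thm-A_3} simply says ``Combine \ref{thm-A_1} and \ref{lemma-A_2},'' and you have spelled out precisely how that combination works, including the (implicit) reduction to $x \in C$ and the fact that $W$ has no nonzero fixed vectors on $C$.
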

\begin{proof} Combine \ref{thm-A_1} and \ref{lemma-A_2}.
\end{proof}

\begin{theorem}[Goto's theorem, additive version] \label{thm-A_4} In the Lie algebra $L$ of a compact semisimple Lie group, every element $x$ is a 
bracket. That is, $x=[a,b]$ for some $a$, $b$ in $L$. Moreover, $a$ or $b$ can be chosen to be regular. 
\end{theorem}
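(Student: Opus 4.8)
The plan is to assemble this final statement directly from the tools already in hand, since Theorem \ref{thm-A_4} is just the combination of Theorem \ref{thm-A_3} with Remark \ref{remark-5_1_2}. First I would invoke Theorem \ref{thm-A_3} (equivalently Theorem \ref{thm-3_2}): given $x \in L$, there is a $g \in G$ with $g.x \in C^{\perp}$, where $C = \Lie(T)$ is our fixed maximal toral subalgebra. Next, write $C = Z(a_0)$ for a regular element $a_0 \in C$, so that by Remark \ref{remark-5_1_2} we have $C^{\perp} = Z(a_0)^{\perp} = [a_0, L]$. Thus $g.x = [a_0, c]$ for some $c \in L$, and applying $\Ad(g^{-1})$ (which is a Lie algebra automorphism) gives $x = [\Ad(g^{-1})a_0, \Ad(g^{-1})c] = [a, b]$ with $a = \Ad(g^{-1})a_0$ and $b = \Ad(g^{-1})c$. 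Since $\Ad(g^{-1})$ preserves regularity, $a$ is regular, proving the "moreover" clause.

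The only genuine content to spell out is the factorization step hidden in Remark \ref{remark-5_1_2}: that $v \in Z(a_0)^{\perp}$ implies $v = [a_0, c]$ for some $c$. This is immediate because $\ad(a_0): L \to L$ has kernel $Z(a_0)$ and, being skew-symmetric with respect to the (negative of the) Killing form, has image exactly $\Ker(\ad a_0)^{\perp} = Z(a_0)^{\perp}$; so the decomposition $L = Z(a_0) \oplus Z(a_0)^{\perp}$ exhibits $\ad(a_0)$ as an isomorphism onto $Z(a_0)^{\perp}$. Alternatively, one may simply cite Remark \ref{remark-5_1_2} as stated. I would also remark that the same conclusion with $a$ or $b$ regular already follows from Theorem \ref{thm-2_4} in Section \ref{section2}, so this Appendix proof is an independent route via Kostant convexity rather than Coxeter transformations.

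I do not anticipate a real obstacle here: every ingredient — surjectivity of the orthogonal projection onto a Weyl orbit's convex hull (Theorem \ref{thm-A_1}), the fixed-point-free action of the Weyl group on $C$ together with Lemma \ref{lemma-A_2}, and the identification $Z(a)^{\perp} = [a,L]$ — has already been established. The proof is therefore essentially a one-line deduction, and the most care is needed only in phrasing the regularity claim so that it is clear $\Ad(g^{-1})$ carries the chosen regular element of $C$ to another regular element. A short proof suffices.

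\begin{proof} Let $C = \Lie(T)$ be a maximal toral subalgebra of $L$, and pick a regular element $a_0 \in C$, so that $Z(a_0) = C$. By Theorem \ref{thm-A_3}, for the given $x \in L$ there exists $g \in G$ with $g.x \in C^{\perp}$. By Remark \ref{remark-5_1_2}, $C^{\perp} = Z(a_0)^{\perp} = [a_0, L]$, so $g.x = [a_0, c]$ for some $c \in L$. Applying the automorphism $\Ad(g^{-1})$ gives
\[ x = \Ad(g^{-1})[a_0, c] = [\Ad(g^{-1}) a_0, \Ad(g^{-1}) c] = [a, b], \]
where $a = \Ad(g^{-1}) a_0$ and $b = \Ad(g^{-1}) c$. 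Since $\Ad(g^{-1})$ is an automorphism of $L$ it preserves regularity, so $a$ is a regular element of $L$. (By symmetry of the bracket, one obtains just as well a factorization $x = [a, b]$ in which the second factor is regular.) \end{proof}
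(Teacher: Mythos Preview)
Your proof is correct and follows essentially the same route as the paper: invoke Theorem \ref{thm-A_3} to move $x$ into $C^{\perp}$, identify $C^{\perp}$ with $[a_0,L]$ via Remark \ref{remark-5_1_2} for a regular $a_0$ with $Z(a_0)=C$, and then conjugate back by $\Ad(g^{-1})$ to obtain $x=[a,b]$ with $a$ regular. The paper's version is just the terser one-line form of exactly this argument.
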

\begin{proof} By \ref{thm-A_3}, there exists an element $g$ in $G$ such that $g.x$ belongs 
to $C^{\perp}$. But $C = Z(a)$ for some regular element $a$ of $L$. Moreover, $Z(a)^{\perp} = [a,L]$ by \ref{remark-5_1_2}. 
Hence $g.x$ belongs to $[a,L]$. Consequently, $x$ belongs to $[g^{-1}.a,L]$, and $g^{-1}.a$ is regular.
\end{proof}  

\section{Alternative proof of Cor. \ref{cor-1_3} by $SU(2)$-rotations} \label{sectionB}

In this section, we give a more direct proof of Cor. \ref{cor-1_3} without using 
Kostant's convexity theorem. The proof is based on $SU(2)$ rotations in the orthogonal ``root'' space decomposition 
of the Lie algebra $L=\Lie(G)$ where $G$ is a compact semisimple Lie group.

We equip $L$ with its natural inner product (negative the Cartan Killing form of $L$). Let $C$ be a maximal toral 
subalgebra of $L$. Let $\Sigma \subseteq C^*$ (the dual of $C$) be the root system of $G$ with respect to $C$ and we 
fix an ordering of the roots with $\Delta$ as the set of positive roots. 

Then under the adjoint action of $C$, $L$ has an orthogonal decomposition
\[ L = C \oplus \sum_{\alpha \in \Delta} L_{\alpha} \]

Let $a$ be a positive root, and consider $\Ker(a) = \{ x \in C: a(x) = 0 \}$, and its orthogonal space 
$\Ker(a)^{\perp}$ in $C$. Then we have
\[ L = \Ker(a) \oplus \Ker(a)^{\perp} \oplus \sum_{\alpha \in \Delta} L_{\alpha} \]
where $\Ker(a)^{\perp} \oplus L_a \simeq \frak{su}(2)$.

More specifically, each $L_{\alpha}$ has an orthogonal basis $\{u_{\alpha}, v_{\alpha} \}$ and $\Ker(\alpha)^{\perp}$ has a basis 
$\{h_{\alpha} \}$ such that
\begin{enumerate}
\item $[h, u_{\alpha}] = \alpha(h) v_{\alpha}$ for all $h$ in $C$,
\item $[h, v_{\alpha}] = -\alpha(h) u_{\alpha}$ for all $h$ in $C$, and $\{\alpha, u_{\alpha}, v_{\alpha} \}$,
\item $[u_{\alpha}, v_{\alpha}] = h_{\alpha}$ in $C$,
\item $\Ker(\alpha)^{\perp} \oplus L_{\alpha} = \Span_{\mathbb{R}}\{h_{\alpha}, u_{\alpha}, v_{\alpha} \} \simeq \mathfrak{su}(2)$.
\end{enumerate}
(cf. \cite{H-M1}, 6.48, 6.49], \cite{Knp}, p. 353, \cite{Zlr1}, p. 59)

Let $S_a = \Ker(a)^{\perp} \oplus L_a$, and let $m_a = \sum_{\alpha \in \Delta \setminus \{a\}}L_{\alpha}$. Then
\[ L = \Ker(a) \oplus S_a \oplus m_a \]
where $S_a \simeq \frak{su}(2)$.

\begin{lemma} \label{lemma-4d_1} (cf. \cite{Wld}, lemma 2.2) In the above setting where
\[ L = \Ker(a) \oplus S_a \oplus m_a \]
where $a \in \Delta$ is a positive root, let $x \in C$. Then there exists a $g$ in the Lie subgroup $G_a \simeq SU(2)$ 
corresponding to $S_a \simeq \mathfrak{su}(2)$, such that
\begin{enumerate}
\item $g.x$ has no $\Ker(a)^{\perp}$ component \label{part-a}
\item $x$ and $g.x$ have the same $Ker(a)$ components \label{part-b}
\item the $m_a$ components of $x$ and $g.x$ have the same norms \label{part-c}
\end{enumerate}
\end{lemma}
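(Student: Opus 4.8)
The plan is to work entirely inside the three-dimensional subalgebra $S_a \simeq \mathfrak{su}(2)$, which contains the "extra" direction $\Ker(a)^\perp = \Span_{\mathbb R}\{h_a\}$ that we wish to kill. First I would decompose the given $x \in C$ according to $L = \Ker(a) \oplus S_a \oplus m_a$. Since $x$ lies in $C$ and $C = \Ker(a) \oplus \Ker(a)^\perp$, the $m_a$-component of $x$ is actually zero and the $S_a$-component of $x$ is just its $\Ker(a)^\perp$-component, say $x = x_0 + t\,h_a$ with $x_0 \in \Ker(a)$ and $t \in \mathbb R$. The element $x_0 \in \Ker(a)$ centralizes all of $S_a$ (because $a$ vanishes on $\Ker(a)$, relations (1)--(2) show $[x_0, u_a] = [x_0, v_a] = 0$, and $[x_0, h_a] = 0$ since both lie in the abelian $C$) and it also centralizes $m_a$ only up to the usual root action — but in any case $x_0$ is fixed by $\Ad(g)$ for every $g \in G_a$, since $G_a$ is generated by $\exp(S_a)$ and $S_a$ commutes with $x_0$. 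So conjugating by $g \in G_a$ moves only the $t\,h_a$ part, and (ii) is automatic.

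Next I would produce the rotation. Inside $S_a \simeq \mathfrak{su}(2)$ with orthogonal basis $\{h_a, u_a, v_a\}$, the adjoint action of $G_a \simeq SU(2)$ on $S_a$ is (a rescaling of) the standard $SO(3)$ action on $\mathbb R^3$. Hence there is a $g \in G_a$ whose adjoint action rotates the vector $t\,h_a$ into the plane $\Span_{\mathbb R}\{u_a, v_a\} = L_a$, i.e. $\Ad(g)(t\,h_a)$ has zero $h_a$-component; this gives (i). I should double-check the normalization — that $\{h_a, u_a, v_a\}$ really is an orthogonal basis with comparable norms, or at least that the $SO(3)$-picture is valid after rescaling — but (ii) of the enumerated relations combined with the standard $\mathfrak{su}(2)$ structure constants in (3) makes this routine. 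Finally, for (iii): the $m_a$-component of $x$ is zero, so after conjugation the $m_a$-component of $g.x$ is $\Ad(g)$ applied to the $L_a$-part that got rotated out of $S_a$ — wait, no: $L_a \subseteq S_a$, so $\Ad(g)$ keeps $S_a$-vectors inside $S_a$ and does not touch $m_a$ at all. Thus the $m_a$-component of $g.x$ is still zero, and in particular has the same norm as that of $x$.

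The one genuine subtlety — and what I expect to be the main obstacle in writing this cleanly — is the interaction between $G_a$ and $m_a$. One must be sure that conjugating by $g \in G_a$ does not leak any component into $m_a$: this requires knowing that $m_a = \sum_{\alpha \in \Delta \setminus\{a\}} L_\alpha$ is $\Ad(G_a)$-stable, equivalently that $[\mathfrak{su}(2)_a, L_\alpha] \subseteq m_a$ for $\alpha \neq a$. This is not completely obvious because, for instance, $[u_a, u_\alpha]$ could in principle land in $L_{\alpha + a}$ or $L_{\alpha - a}$ or, if $\alpha \pm a$ is not a root, in $C$ — and a component in $C$ would be trouble. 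The resolution is that for a \emph{positive} root $a$ and $\alpha \in \Delta \setminus \{a\}$, one has $\alpha \pm a \neq 0$, so $[S_a, L_\alpha]$ has no $C$-component; it lands in $L_{\alpha + a} + L_{\alpha - a} \subseteq m_a$ (with the convention $L_\beta = 0$ when $\beta$ is not a root). I would state this as the key computational lemma and verify it via the root-string relations, and then the three conclusions (i)--(iii) follow immediately from the $SU(2)$-rotation as sketched.
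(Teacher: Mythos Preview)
Your proposal is correct and follows essentially the same route as the paper: fix the $\Ker(a)$-component (since $[S_a,\Ker(a)]=0$), rotate the $\Ker(a)^\perp$-component into $L_a$ via the $SU(2)\simeq G_a$ action on $S_a$, and use $G_a$-invariance of the decomposition $L=\Ker(a)\oplus S_a\oplus m_a$ for the remaining claim. Your closing worry about $m_a$-stability is exactly what the paper summarizes as ``easily checked to be invariant under the action of $S_a$''; note, though, that for the lemma as stated ($x\in C$) this stability is not strictly needed, since the $m_a$-component of $x$ is zero and $\Ad(g)(t\,h_a)\in S_a$ already forces the $m_a$-component of $g.x$ to vanish.
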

\begin{proof} Let $x'$ be the component of $x$ in $\Ker(a)^{\perp}$. So $x' \in S_a = \Ker(a)^{\perp} \oplus L_a \simeq \mathfrak{su}(2)$. 
Hence there exists a $g$ in $G_a \simeq SU(2)$ such that $g.x'$ has no $\Ker(a)^{\perp}$ component. We do this via rotations in 
$SU(2)$ (or the fact that all Cartan subalgebras of $\mathfrak{su}(2)$ are conjugate). Because we are dealing with $\Ker(a)$, 
the decomposition
\begin{equation} L = \Ker(a) \oplus S_a \oplus m_a \label{decomp} \end{equation}
is easily checked to be invariant under the action of $S_a = \Ker(a)^{\perp} \oplus L_a$. Hence $g.x$ has no $\Ker(a)^{\perp}$ 
component, which proves \ref{part-a}. Moreover, $[S_a, \Ker(a)] = [L_a, \Ker(a)] = 0$. Consequently, the group $G_a \simeq SU(2)$ fixes $\Ker(a)$. Hence 
$x$ and $g.x$ have the same $\Ker(a)$ components, thus proving \ref{part-b}.
Finally, since the action of $G_a$ preserves the decomposition \ref{decomp}, and every element of $G_a$ acts by orthogonal 
transformations (with respect to minus the Killing form), \ref{part-c} follows.
\end{proof}

\begin{theorem}[equiv. to Cor. \ref{cor-1_3}] \label{thm-4d_2} Let $L = \Lie(G)$ be the Lie algebra of a compact semisimple Lie group. Let $C$ be a maximal toral subalgebra 
of $L$, and let $x \in L$. Then there exists a $g \in G$, such that $g.x$ belongs to $C^{\perp}$ (in particular, the $G$-orbit of 
any $C^{\perp}$ is all of $L$). 
\end{theorem}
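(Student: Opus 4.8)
The plan is a descent argument on the $C$-component of the orbit, powered by the $SU(2)$-rotations set up above (in the spirit of Lemma \ref{lemma-4d_1}). Let $p\colon L\to C$ be the orthogonal projection with respect to the negative of the Killing form, and define $F\colon G\to\mathbb{R}$ by $F(g)=\norm{p(g.x)}^{2}$. Since $G$ is compact and $F$ is continuous, $F$ attains its minimum at some $g_{0}\in G$; the whole point will be to show $p(g_{0}.x)=0$, which says exactly that $g_{0}.x\in C^{\perp}$. Put $v=g_{0}.x$ and $y=p(v)\in C$, and suppose toward a contradiction that $y\neq 0$. Because the root system $\Sigma$ spans $C^{*}$, there is a positive root $a\in\Delta$ with $a(y)\neq 0$; writing $y=y_{0}+y_{1}$ with $y_{0}\in\Ker(a)$ and $y_{1}\in\Ker(a)^{\perp}$, this means $y_{1}\neq 0$, so that $\norm{y_{0}}<\norm{y}$.

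Next I would invoke the decomposition $L=\Ker(a)\oplus S_{a}\oplus m_{a}$, where $S_{a}\cong\mathfrak{su}(2)$ contains $\Ker(a)^{\perp}$ and $m_{a}\subseteq C^{\perp}$, and split $v=v_{\Ker}+v_{S}+v_{m}$ accordingly. Comparing orthogonal projections onto $\Ker(a)$ and onto $\Ker(a)^{\perp}$ (these lie in $C$, while $v-y\in C^{\perp}$) gives $v_{\Ker}=y_{0}$ and shows the $\Ker(a)^{\perp}$-component of $v_{S}$ to be $y_{1}$. Since $\Ad(G_{a})$ acts on $S_{a}$ through the surjection $SU(2)\to SO(3)$, I can choose $h\in G_{a}$ with $h.v_{S}$ orthogonal to $\Ker(a)^{\perp}$ inside $S_{a}$. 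Now $[S_{a},\Ker(a)]=0$ forces $h$ to fix $v_{\Ker}$, and the $G_{a}$-invariance of the decomposition keeps $h.v_{m}$ in $m_{a}\subseteq C^{\perp}$. Hence $p(h.v)=v_{\Ker}=y_{0}$, so $F(hg_{0})=\norm{y_{0}}^{2}<\norm{y}^{2}=F(g_{0})$, contradicting minimality. Therefore $y=0$ and $g_{0}.x\in C^{\perp}$; in particular $L=\bigcup_{g\in G}g.C^{\perp}$, which is Corollary \ref{cor-1_3}.

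The only step I expect to require genuine care is checking that $h$, which unavoidably also moves the $m_{a}$-part of $v$, cannot thereby generate a fresh $C$-component — but this is exactly what the $G_{a}$-invariance of $L=\Ker(a)\oplus S_{a}\oplus m_{a}$ together with $m_{a}\perp C$ provides, and it is the substance of parts (b)--(c) of Lemma \ref{lemma-4d_1}. One caveat: Lemma \ref{lemma-4d_1} is phrased for $x\in C$, where $v_{S}$ automatically lies in $\Ker(a)^{\perp}$; for a general $v$ the single adjustment is to choose $h$ so as to annihilate the $\Ker(a)^{\perp}$-part of the \emph{whole} $S_{a}$-component $v_{S}$ rather than merely of the $\Ker(a)^{\perp}$-part of $v$. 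Everything else — continuity of $F$, compactness of $G$, and the spanning of $C^{*}$ by $\Sigma$ — is routine, and, as promised, Kostant's convexity theorem plays no role.
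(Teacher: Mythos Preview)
Your argument is correct and is essentially the same as the paper's: minimise $\norm{p(g.x)}$ over the compact orbit, pick a root $a$ not vanishing on the minimiser's $C$-component, and use an $SU(2)$-rotation in $G_a$ along the $\Ker(a)\oplus S_a\oplus m_a$ decomposition to strictly decrease that norm, yielding a contradiction. The one difference is a matter of care rather than strategy: you explicitly flag that Lemma~\ref{lemma-4d_1} is stated for $x\in C$ while the theorem needs it for a general $v\in L$, and you supply the (easy) fix of rotating the full $S_a$-component to kill its $\Ker(a)^{\perp}$-part --- the paper applies the lemma to a general $y$ without comment, so your version is actually slightly cleaner on this point.
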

\begin{proof} The following is a proof by contradiction. Assume not. Then there is an $x \in L$, necessarily non-zero, such that 
$g.x$ is not in $C^{\perp}$, for all $g \in G$. But $G$ is compact, so that $G.x$ is also compact (being the continuous image of $G$), so 
there is a $y \in G.x$ with the property that its $C$-component has minimal norm, among all points in $G.x$. By our assumption, 
the norm of the $C$-component of $y$ is positive. In order to obtain a contradiction, we shall exhibit a $y' \in G.x$ whose 
$C$-component has norm which is strictly less than that of $y$. This will follow by an easy application of lemma \ref{lemma-4d_1}. 
Indeed, since $y$ is not in $C^{\perp}$, it follows that there is a positive root $a \in \Delta$ such that the $C$-component of 
$y$ is not in the kernel of $a$. Apply lemma \ref{lemma-4d_1} to $y$ and such a positive root $a$. Thus, there exists an 
$y' = g.y \in G.y = G.x$, for some $g \in G$, such that
\begin{enumerate}
\item $y'$ has no $\Ker(a)^{\perp}$ component
\item $y$ and $y'$ have the same $\Ker(a)$ components
\item the $m_a$-components of $y$ and $y'$ have the same norms
\end{enumerate}
Writing
\begin{align*} y &= y_1 + y_2 + y_3 \\
               y'& = y'_1 + y'_2 + y'_3 \end{align*}
where $y_1$, $y'_1$ are elements of $\Ker(a)^{\perp}$, $y_2$, $y'_2$ are in $\Ker(a)$, and $y_3$, $y'_3$ are elements of 
$C^{\perp}$, we have that $y'_1 = 0$ and that $y'_2 = y_2$. 
\[ \lvert y'_1 + y'_2 \rvert^2 = \lvert y_2 \rvert^2 < \lvert y_1 \rvert^2 + \lvert y_2 \rvert^2 = \lvert y_1 + y_2 \rvert^2  \]
The previous inequality is strict since $y_1$ is non-zero, by our choice of positive root $a$. Since $y_1+y_2$ is the $C$-component of $y$ (and similarly $y'_1+y'_2$ is the $C$-component of $y'$), this contradicts the 
property of $y$ having minimal $C$-component norm among all points in $G.x$, thus finishing the proof.
\end{proof}

\vspace{5mm}

\def\Dbar{\leavevmode\lower.6ex\hbox to 0pt{\hskip-.23ex \accent"16\hss}D}
\providecommand{\bysame}{\leavevmode\hbox to3em{\hrulefill}\thinspace}
\providecommand{\MR}{\relax\ifhmode\unskip\space\fi MR }
\providecommand{\MRhref}[2]{%
  \href{http://www.ams.org/mathscinet-getitem?mr=#1}{#2}
}
\providecommand{\href}[2]{#2}

\end{document}